\documentclass[11pt,reqno]{amsart}
\usepackage[T1]{fontenc}
\usepackage[utf8]{inputenc}
\usepackage{amsmath,amssymb,amsthm,mathtools}
\usepackage[margin=1.1in]{geometry}
\usepackage[expansion=false]{microtype}
\usepackage{enumitem}
\usepackage{booktabs,array}
\usepackage{listings}
\usepackage{xcolor}
\usepackage[colorlinks=true,linkcolor=blue!55!black,citecolor=green!45!black,urlcolor=blue!60!black]{hyperref}
\usepackage[capitalise,nameinlink]{cleveref}

\theoremstyle{plain}
\newtheorem{theorem}{Theorem}[section]
\newtheorem{proposition}[theorem]{Proposition}
\newtheorem{lemma}[theorem]{Lemma}

\newtheorem{problem}[theorem]{Problem}
\theoremstyle{definition}
\newtheorem{definition}[theorem]{Definition}

\newtheorem{remark}[theorem]{Remark}

\DeclareMathOperator{\tr}{tr}
\DeclareMathOperator{\Res}{Res}
\DeclareMathOperator{\Disc}{Disc}
\DeclareMathOperator{\Cov}{Cov}
\DeclareMathOperator{\Inv}{Inv}

\DeclareMathOperator{\res}{res}
\newcommand{\PP}{\mathbb{P}}
\renewcommand{\AA}{\mathbb{A}}
\newcommand{\QQ}{\mathbb{Q}}
\newcommand{\ZZ}{\mathbb{Z}}
\newcommand{\Fbar}{\overline{F}}
\newcommand{\GL}{\mathrm{GL}}
\newcommand{\SL}{\mathrm{SL}}
\newcommand{\PGL}{\mathrm{PGL}}
\newcommand{\Dop}{\mathcal D}
\newcommand{\Lop}{\mathcal L}
\newcommand{\Lam}{\Lambda}
\newcommand{\charr}{\operatorname{char}}
\newcommand{\cont}{\mathfrak c}
\usepackage{verbatim}

\begin{document}

\title[The Hermite--Joubert problem in degree seven]
{A septic covariant and the Hermite--Joubert problem in degree seven}

\author{Sunil K. Chebolu}
\address[Chebolu]{Department of Mathematics, Illinois State University, Normal, IL 61761, USA}
\email{schebol@ilstu.edu}

\author{J\'an Min\'a\v{c}}
\address[Min\'a\v{c}]{Department of Mathematics, The University of Western Ontario, London, ON N6A 5B7, Canada}
\email{minac@uwo.ca}

\author{Behzad Nikzad}
\address[Nikzad]{Tempered AI, 1920 Yonge St \#200, Toronto, ON M4S 3E2, Canada}
\email{behzad.nikzad@tempered.ai}

\author{Charlotte Ure}
\address[Ure]{Department of Mathematics, Illinois State University, Normal, IL 61761, USA}
\email{cure@ilstu.edu}

\subjclass[2020]{Primary 12E05, 12F10; Secondary 13A50, 11E76}
\keywords{Hermite--Joubert problem, Tschirnhaus transformation, trace form, binary septic,
covariant, transvectant, Wronskian, Euler--Jacobi relation}

\begin{abstract}
We prove the degree-seven case of the Hermite--Joubert problem in characteristic zero:
if $F$ is a field of characteristic zero and $E/F$ is a field extension of degree seven,
then $E$ is generated by an element $a$ with $\tr_{E/F}(a)=\tr_{E/F}(a^{3})=0$, that is, with
minimal polynomial of the form $\lambda^{7}+c_{2}\lambda^{5}+c_{4}\lambda^{3}+c_{5}\lambda^{2}+c_{6}\lambda+c_{7}$, where the $c_i$'s belong to $F$.  This is given by an explicit formula: a covariant of the binary septic of
coefficient degree seven and order five, 
evaluated at a generator $\theta$ and divided by
the derivative of its minimal polynomial evaluated at $\theta$.
We also announce the general theorem, which will be proved in a
companion paper in preparation: over every infinite field, in every characteristic,
every \'etale algebra of degree seven contains a primitive element $a$ with
$c_{1}(a)=c_{3}(a)=0$. Moreover, every field extension of degree seven of an arbitrary field has a
generator $a$ with $c_{1}(a)=c_{3}(a)=0$.
\end{abstract}

\maketitle

\section{Introduction}\label{sec:intro}

Let $F$ be a field and $E/F$ an \'etale algebra of degree $n$ --- for instance a separable
field extension of degree $n$. For $a\in E$ write
\[
  \chi_{a}(\lambda)=\det(\lambda-a\mid_E)=\lambda^{n}+c_{1}(a)\lambda^{n-1}+\cdots+c_{n}(a)
\]
for the characteristic polynomial, where $c_{1}(a)=-\tr_{E/F}(a)$. We call $a$
\emph{primitive} if $E$ is generated by $a$ over $F$, that is, $F[a]=E$. When $n$ is invertible in $F$, the coefficient $c_{1}$ can be killed by the scalar translation
$a\mapsto a-\tfrac{1}{n}\tr_{E/F}(a)$. 
The Hermite--Joubert problem asks whether \emph{two} coefficients can be eliminated at once. 

\begin{problem}[Hermite--Joubert]\label[problem]{prob:HJ}
For which integers $n$, and over which classes of base fields $F$, does every \'etale
$F$-algebra $E$ of degree $n$ contain a primitive element $a$ with $c_{1}(a)=c_{3}(a)=0$,
that is, with characteristic polynomial of the form
$\lambda^{n}+c_{2}\lambda^{n-2}+c_{4}\lambda^{n-4}+c_{5}\lambda^{n-5}+\cdots+c_{n}$?
\end{problem}

When $\charr F\neq3$, Newton's identities turn the coefficient condition into
$\tr_{E/F}(a)=\tr_{E/F}(a^{3})=0$. The problem dates back to 1861 and 1867, when, in the classical characteristic-zero setting, Hermite
\cite{Hermite1861} proved the degree-five theorem and Joubert \cite{Joubert1867} the
degree-six theorem. Modern extensions require the familiar characteristic restrictions,
including $\charr F\ne2$ in degree six \cite{Reichstein2014}.
Further proofs were given by Coray \cite{Coray1987}  through the arithmetic of cubic
hypersurfaces, by Kraft \cite{Kraft2006} through classical invariant theory, and,
for fields containing an algebraically closed field, by Brassil and
Reichstein \cite{BrassilReichstein2019}   through Galois cohomology. Brassil and Reichstein \cite{BrassilReichstein2017} 
also provide positive results, for special fields and conditionally
over all fields, for Hermite--Joubert problems which include $n = 7$.
In the negative
direction, for the general degree-$n$ extension in characteristic zero, Reichstein
\cite{Reichstein1999} proved failure for $n=3^{m}$ and $n=3^{m}+3^{l}$ ($m>l\ge0$),
and Nguyen \cite{Nguyen2019}, addressing a conjecture of Brassil and Reichstein
\cite{BrassilReichstein2017}, extended this to $n=3^{k_{1}}+3^{k_{2}}+3^{k_{3}}$ with distinct $k_i$. The obstruction theorems
just cited concern sums of distinct powers of $3$, whereas $7=3+3+1$ is not of that form.
Coray \cite[Thm.~4.2]{Coray1987}  settled $n=7$ (and $n=8$) in the
affirmative for fields complete with respect to a discrete
valuation whose residue field has the Cassels--Swinnerton-Dyer
property and whose characteristic is different from 2 and 3, a class containing all local fields. Over a general
base field the degree-seven case remained open; Brassil and
Reichstein wrote in 2017: \emph{``for other values of $n$ (in
particular, for $n=7$), this question remains open''} \cite{BrassilReichstein2017}.

The purpose of this note is to announce a positive answer for $n=7$, and to give the
complete proof in the case of characteristic zero. The answer is constructive, with an explicit formula. For a binary septic
$f=\sum_{i=0}^{7}a_{i}X^{7-i}Z^{i} \in F[X,Z]$ define
\[ 
Q_{f}=7\,D_{2}P_{5}-10\,C_{1}T_{1},
\] with the unnormalised transvectant of
\Cref{def:tv}
\begin{equation}\label{eq:chain}
\begin{aligned}
  D_{1}&=(f,f)_{4}, & D_{2}&=(f,f)_{6}, & D_{3}&=(f,f)_{2},\\
  T_{1}&=(f,D_{1})_{4}, & T_{5}&=(f,D_{3})_{5}, & &\\
  C_{1}&=(f,T_{5})_{7}, & C_{7}&=(f,T_{1})_{3}, & P_{5}&=(f,C_{7})_{5}.
\end{aligned}
\end{equation}
By construction, $Q_{f}$ is a covariant of the binary septic of coefficient degree seven (in the 8 coefficients $a_{0},\dots,a_{7}$ of $f$) and order five (in the two variables $X$ and $Z$). Its full expansion in $\ZZ[a_{0},\dots,a_{7},X,Z]$ has $780$ nonzero monomials; its coefficients have greatest common divisor
$\cont=2^{32}3^{14}5^{6}\cdot7$, and $\widehat Q_{f}=Q_{f}/\cont$ is the primitive integral
covariant. Put $q_{f}(T)=\widehat Q_{f}(T,1)$, a polynomial of degree at most $5$.

The recoupling identities of  Lemma \ref{prop:recoup}  reduce $Q_f$ from its original
$780$ terms to an expression in $f$ and its quadratic covariant alone;
and after a projective change of variable it collapses further, to
four terms (Proposition \ref{prop:collapse}).

The following is then the main theorem of the paper that resolves the Hermite--Joubert problem affirmatively in degree seven over fields of characteristic zero. Throughout our paper, except when we refer to results in a forthcoming paper,
we assume that our base field $F$ has characteristic zero.

\begin{theorem}\label[theorem]{thm:main}
Let $F$ be a field of characteristic zero and $E/F$ a field extension of degree seven.
Then there is a generator $a$ of $E/F$ with $\tr_{E/F}(a)=\tr_{E/F}(a^{3})=0$. Equivalently,
the minimal polynomial of $a$ has the form
\[
  \lambda^{7}+c_{2}\lambda^{5}+c_{4}\lambda^{3}+c_{5}\lambda^{2}+c_{6}\lambda+c_{7}.
\]
More precisely, for every generator $\theta$ of $E/F$ with minimal polynomial $f_{\theta}$
the element
\begin{equation}\label{eq:formula}
  a=\frac{q_{f_{\theta}}(\theta)}{f_{\theta}'(\theta)}
\end{equation}
satisfies $\tr_{E/F}(a)=\tr_{E/F}(a^{3})=0$, and there is a nonempty Zariski-open subset $\mathcal{W}$ of the
affine space underlying $E$, consisting of generators, such that $a\ne0$ for every
$\theta\in \mathcal{W}(F)$. Since $F$ is infinite,  $\mathcal{W}(F)\ne\emptyset$, and every such $a$ generates $E/F$.
\end{theorem}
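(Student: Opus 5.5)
The plan is to reduce both trace conditions to residue identities via the Euler--Jacobi relation. Write $f=f_\theta$, of degree $7$ with distinct roots $\theta_1,\dots,\theta_7$ in $\Fbar$. For $a=q_f(\theta)/f'(\theta)$ the conjugates are $a_i=q_f(\theta_i)/f'(\theta_i)$. Hence
\[
\tr_{E/F}(a)=\sum_i \frac{q_f(\theta_i)}{f'(\theta_i)},
\]
and this vanishes by the Euler--Jacobi (Lagrange interpolation) identity, because $\deg q_f\le 5\le \deg f-2$. This is exactly why the order of the covariant is five.

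For the cubic condition I would rewrite
\[
\tr(a^3)=\sum_i \frac{q_f(\theta_i)^3}{f'(\theta_i)^3}=\sum_i \frac{R(\theta_i)}{f'(\theta_i)},
\]
where $R\equiv q_f^3\,(f')^{-2}\pmod f$. Equivalently, $\tr(a^3)$ is the sum of the residues of $q_f^3/(f'^2 f)$ at the roots of $f$. Since this rational function has degree $15-12-7<-1$ at infinity, there is no residue at $\infty$. It follows that $\tr(a^3)=-\sum \operatorname{Res}_{\beta}$ over the zeros $\beta$ of $f'$ that are not roots of $f$. This is unwieldy, so I would instead prove the identity universally. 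Namely, $\tr(a^3)\cdot\Disc(f)^3$ is a polynomial in $a_0,\dots,a_7$ that is an $\SL_2$-invariant of the binary septic, of some fixed degree. The point to verify is that this invariant is identically zero.

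To make the verification feasible I would use covariance. The map $\theta\mapsto a$ is compatible with Möbius changes of the generator: replacing $\theta$ by $(\alpha\theta+\beta)/(\gamma\theta+\delta)$ multiplies $a$ by a nonzero scalar in $F$, by weight bookkeeping for a covariant of degree $7$ and order $5$ divided by $f'$. So the vanishing of $\tr(a^3)$ need only be checked on a normal form. Here I would invoke Lemma~\ref{prop:recoup} and Proposition~\ref{prop:collapse}. After a projective change of variable, $Q_f$ becomes a four-term expression in $f$ and its quadratic covariant. Over $\Fbar$, one can move three roots to $0,1,\infty$, or move the quadratic covariant to $XZ$. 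The residue sum can then be computed directly as a rational identity in the remaining parameters, for instance by symbolic expansion using symmetric functions of the roots. I expect this step, showing the invariant $\Disc^3\tr(a^3)$ vanishes identically, to be the main obstacle. The first thing I would try is a direct computer-algebra check on the collapsed four-term form. Failing that, I would decompose the invariant into a basis of degree-$d$ invariants of the septic and show every coordinate vanishes, testing on enough specialised septics.

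Finally, nonvanishing and generation. The condition $a\ne0$ means $q_f\not\equiv0 \pmod f$, and since $\deg q_f<7$ this is just $q_f\neq0$ as a polynomial. So I need the covariant $\widehat Q_f$ not to vanish identically. This is shown by exhibiting a single septic, e.g.\ the collapsed normal form, with $\widehat Q_f\ne0$. I would define $\mathcal W$ to be the set of $\theta\in E$ whose characteristic polynomial has nonzero discriminant and satisfies $\widehat Q_{\chi_\theta}\ne 0$. Both conditions are polynomial in the coordinates of $\theta$, so $\mathcal W$ is open, and it is nonempty once one such point exists over $\Fbar$. Since $F$ is infinite, $\mathcal W(F)\neq\emptyset$. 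Then $a\ne0$, and because $[E:F]=7$ is prime, $F[a]$ is $F$ or $E$. If $a\in F$, then $\tr(a)=7a=0$ forces $a=0$, which is excluded. Hence $a$ generates $E$.
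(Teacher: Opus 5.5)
Your handling of $\tr(a)=0$, the covariance reduction to a septic in normal position via \Cref{prop:recoup} and \Cref{prop:collapse}, and the final nonvanishing and primitivity argument all follow the paper. The gap is that the proposal never proves $\tr(a^{3})=0$, which is the real content of the theorem. At that point you only announce a computer-algebra check. As a fallback you propose decomposing $\Disc(f)^{3}\tr(a^{3})$ over a basis of invariants and testing on ``enough'' specialised septics. Neither is carried out, and the fallback is not a proof as stated: you would need the dimension of the relevant high-degree space of invariants, and test septics on which a basis evaluates to an invertible matrix. Your residue reformulation also points the wrong way. The differential $q^{3}\,dT/\bigl((f')^{2}f\bigr)$ has poles at the zeros of $f'$, which is exactly why it became unwieldy. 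You also do not note that the normal form $D_{2}=\lambda XZ$, with $\infty$ not a root, exists only on a dense open set, so a density argument is needed.

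The missing idea is a Wronskian certificate, which replaces that differential by one whose only poles are the roots of $f$. By \Cref{lem:residues}, $\res_{T=r}\,g\,dT/f^{2}=W(f',g)(r)/f'(r)^{3}$ with $W(f',g)=f'g'-f''g$. There is no residue at $\infty$ when $\deg g\le 12$. So it suffices to produce $g$ with $\deg g\le 12$ and $W(f',g)\equiv q^{3}\pmod f$. The paper takes $g=Tq^{2}-\tfrac{11}{12}T^{2}qq'-\tfrac1{12}T^{3}qq''+\tfrac14T^{3}(q')^{2}$, of degree at most $11$. It proves $W(f',g)=q^{3}-35a_{1}fR_{0}+AP_{0}+CR_{0}$ identically in $a_{0},\dots,a_{7}$ (\Cref{thm:cert}), so no parametrisation of the locus $A=C=0$ is needed. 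In normal position this is $q^{3}$ modulo $f$, and the residue theorem gives $\sum_i q(r_i)^{3}/f'(r_i)^{3}=0$. The identity then passes from the dense open set $\mathcal U$ to every separable septic. This works because $e_{3}(y)\Disc(f)$ is a polynomial in the coefficients and $\sum_i y_i^{3}=3e_{3}(y)$ once $\sum_i y_i=0$. Since you were willing to cite later results, you could simply have cited \Cref{thm:identity}, which is exactly what the paper's proof of the main theorem does. A minor point on the last step: you never exhibit a septic with $\widehat Q_{f}\neq0$. The obvious normal-position candidate $X^{7}+Z^{7}$ has $q=0$; the paper instead uses $T^{7}+T$, for which $q_{f}=-3\cdot 35^{3}T^{3}$.
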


The first part of \Cref{thm:main}  follows from two identities: a classical linear one provided by Euler, and a highly non-trivial cubic one that forms the core mathematical engine of this paper. We record these in the next theorem.

\begin{theorem}\label[theorem]{thm:identity}
Let $f$ be a monic separable polynomial of degree seven over a field of characteristic zero,
with roots $r_{1},\dots,r_{7}$ in an algebraic closure, and put
$y_{i}=q_{f}(r_{i})/f'(r_{i})$. Then
\[
  \sum_{i=1}^{7}y_{i}=0\qquad\text{and}\qquad\sum_{i=1}^{7}y_{i}^{3}=0 .
\]
\end{theorem}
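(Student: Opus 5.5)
The first identity is the classical Euler--Jacobi relation. Since $\deg q_f\le 5\le 7-2$ and $f$ is monic and separable, the partial fraction decomposition of $q_f/f$ gives
\[
\sum_{i=1}^7 \frac{q_f(r_i)}{f'(r_i)}=\frac{1}{2\pi i}\oint \frac{q_f(\lambda)}{f(\lambda)}\,d\lambda = -\Res_{\lambda=\infty}\frac{q_f}{f}=0.
\]
Equivalently, the coefficient of $\lambda^{-1}$ in the expansion of $q_f/f$ at infinity vanishes. This step is immediate and needs nothing about $q_f$ beyond its degree.

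For the cubic identity I would reduce to a statement about invariants. Write $S(f)=\sum_i y_i^3$. It is a symmetric rational function of the roots, with denominator a power of the discriminant, so $S(f)\cdot\Disc(f)^{k}$ (for suitable $k$; with $y_i^3$ having denominator $f'(r_i)^3$, $k=3$ suffices after symmetrisation) is a polynomial in $a_0,\dots,a_7$. The covariance of $\widehat Q_f$ under $\GL_2$, together with the transformation rule of $f'(r_i)$ under projective substitutions, should show that the map $f\mapsto \sum_i \widehat Q_f(r_i,1)^3/f'(r_i)^3$ is itself a relative invariant of the binary septic (of some computable weight), once it is homogenised. The sum $\sum_i$ of $G(r_i)/f'(r_i)^3$ is a kind of Jacobi residue sum $\sum\Res\, G\,d\lambda/f^3$ corrected by lower-order terms. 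The cleanest route is to work with the homogeneous version
\[
\sum_i \frac{\widehat Q_f(\rho_i)^3}{\bigl(\partial f(\rho_i)\bigr)^3},
\]
with $\rho_i\in\PP^1$ the roots, and check that it has the correct degree of homogeneity in $\rho_i$ (order $15$ versus $3\cdot 6=18$). The mismatch $-3$ is exactly what makes the sum independent of the affine chart, after inserting the Euler--Jacobi normalisation. So $S$ is a $\SL_2$-invariant, and it suffices to prove $S=0$ on a Zariski-dense set of septics.

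Then I would use \Cref{prop:collapse}. By invariance, after a projective change of variable (over $\Fbar$) I may move $f$ to a normal form, for instance one with three roots at $0,1,\infty$ or with the quadratic covariant in a standard shape. In that form $q_f$ is a four-term expression, and by \Cref{prop:recoup} it is written through $f$ and its quadratic covariant. Expanding $y_i^3$ then gives $\sum_i G(r_i)/f'(r_i)^3$ with $G$ explicit. I would reduce $G$ modulo $f$ using $f(r_i)=0$ and rewrite the cube so that terms like $f''$ and $(f')^2$ appear. Each resulting piece $\sum_i H(r_i)/f'(r_i)^{m}$ is then evaluated by the generalised Euler--Jacobi/residue formula: $\sum_i H(r_i)/f'(r_i)$ vanishes whenever $\deg H\le 5$, and higher powers of $f'$ are handled by residues of $H\,d\lambda/f^m$ plus derivative corrections.

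The main obstacle is this cubic residue computation: showing that the specific combination $7D_2P_5-10C_1T_1$ makes all contributions cancel. I expect to try the normal form first and verify the vanishing symbolically. Failing a conceptual cancellation, I would fall back on a direct check that the invariant $\Disc(f)^3 S(f)$ vanishes identically. It lies in a finite-dimensional space of invariants of fixed degree, so it suffices to test it on enough specialisations, or to compute it exactly in a basis of septic invariants of that degree.
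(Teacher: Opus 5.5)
There is a genuine gap. Your argument for $\sum_i y_i=0$ is fine in its partial-fraction form (the contour integral has no meaning over a general field of characteristic zero). The gap is in the cubic identity. What you actually establish there is scaffolding: projective invariance, reduction to a dense open set, and passage to the normal form $D_2=\lambda XZ$, where $q_f$ becomes a multiple of $q=a_1T^5-a_3T^3-a_4T^2+a_6$. The one step that carries the content of the theorem, namely \emph{why} $\sum_i q(r_i)^3/f'(r_i)^3=0$, is left as ``verify the vanishing symbolically''.

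Your residue heuristic points the right way but never reaches a checkable criterion. The needed criterion is this. Since $\res_{T=r}\,g\,dT/f^{2}=W(f',g)(r)/f'(r)^{3}$ with $W(f',g)=f'g'-f''g$, and $g\,dT/f^{2}$ has no residue at $\infty$ when $\deg g\le 12$, it suffices to produce such a $g$ with $W(f',g)\equiv q^{3}\pmod f$. The paper does exactly this. It takes $g=Tq^{2}-\tfrac{11}{12}T^{2}qq'-\tfrac1{12}T^{3}qq''+\tfrac14T^{3}(q')^{2}$, which has degree at most $11$. This $g$ satisfies the polynomial identity $W(f',g)=q^{3}-35a_{1}fR_{0}+AP_{0}+CR_{0}$, and the last two terms vanish in normal position. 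Whether $q^3$ lies in the image of $g\mapsto W(f',g)$ modulo $f$ is precisely the theorem. Without such a certificate $g$, or an equivalent explicit computation, the cubic identity has not been proved.

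Your fallback is to show that the polynomial invariant $\Disc(f)^{3}S(f)$ vanishes identically (or $\Disc(f)\,e_3(y)$, which suffices because $e_1(y)=0$ gives $p_3=3e_3$). That is legitimate as a finite computation, but as written it is only a plan. ``Enough specialisations'' needs a concrete separating set: for example, the dimension of the relevant space of invariants together with septics on which evaluation is injective, or a degree bound plus an interpolation grid. None of this is carried out.

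A secondary point: your homogeneity count does not give chart independence. A quantity of degree $-3$ in a representative $\rho_i$ is not well defined on $\PP^1$. The correct mechanism is that $Q_f$ has order exactly $n-2=5$. Then $\widetilde y_i=Q(v_i)/\prod_{j\ne i}[v_i,v_j]$ is homogeneous of the same degree $d-1$ in every root vector, and is unchanged under $\SL_2$. Hence a projective change of coordinates permutes the $y_i$ and multiplies them all by one common scalar. With that repaired, your reduction to the normal form agrees with the paper's, but the core cancellation is still missing.
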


The first sum is Euler's identity. For the proof of the second identity (\Cref{sec:proof}) we remark the following: after a projective change of
variable that moves the two roots of the quadratic covariant $D_{2}$ to $0$ and $\infty$, 
the covariant $Q_f$, whose generic expansion has $780$ monomials,  collapses to a constant multiple of the four-term
polynomial $q=a_{1}T^{5}-a_{3}T^{3}-a_{4}T^{2}+a_{6}$, and for this $q$ one writes down an
explicit polynomial $g$ of degree at most $11$ with $W(f',g)=f'g'-f''g\equiv q^{3}\pmod f$. Since
$W(f',g)(r)/f'(r)^{3}$ is the residue at $r$ of the differential $g\,dT/f^{2}$, and this
differential has no residue at infinity when $\deg g\le12$, the cubic sum vanishes by the
residue theorem.

\subsection*{Forthcoming work}
This note proves the characteristic-zero field-extension case. A comprehensive companion
paper in preparation will prove the general theorem: (i) after primitive integral
normalisation the septic identity becomes an identity over $\ZZ$, hence holds in every
characteristic; (ii) over every infinite field $F$, in every characteristic, every
\'etale $F$-algebra of degree seven contains a primitive element $a$ with
$c_{1}(a)=c_{3}(a)=0$, and every field extension of degree seven of an arbitrary field has
a generator that satisfies $c_{1}(a)=c_{3}(a)=0$. (Over a finite field a split algebra of degree seven
need not have a primitive element at all, so ``infinite'' cannot be dropped in the \'etale
statement.) The companion paper also studies the minimality of the coefficient degree
seven within the order-five covariant  and the rigidity of $Q_{f}$. It will explain the ratio $7:-10$ as the spectral value $10$ of an Euler operator and the
Rankin--Cohen-type calculus behind the polynomial $g$.   The methods and ideas developed in this paper provide a common framework for studying Hermite--Joubert type problems and suggest a treatment of every degree greater than or equal to 5, except $9$, over $\QQ$.

\subsection*{Organization}
\Cref{sec:dictionary} provides a preliminary discussion on trace conditions and sums over
roots. In this section, we parametrize all elements in our degree 7 extension that have trace zero. \Cref{sec:covariant} defines transvectants and the key covariant $Q_{f}$.
\Cref{sec:proof} and \Cref{sec:mainproof}  prove  \Cref{thm:identity} and  
\Cref{thm:main} respectively. In \Cref{sec:example}, we provide concrete examples over the rationals to illustrate our methods.
\Cref{sec:code} describes the computational aspects of our work, and \Cref{sec:ai} is a
declaration on the use of artificial intelligence in this work.

\subsection*{Acknowledgements}
We thank Z. Reichstein and M. Brassil for formulating the problem in the shape that made this work
possible. We thank Nguyen Duy Tan for discussions concerning this project. J\'an Min\'a\v{c} is partially supported by the Natural Sciences
and Engineering Research Council of Canada (NSERC) grant R0370A01; he also gratefully
acknowledges the Western University Faculty of Science Distinguished Professorship  and the support of the Western Academy for Advanced Research. Charlotte Ure was partially supported by a Pre-Tenure Faculty Initiative Grant (PFIG) from Illinois State University.

\section{Trace conditions as sums over roots}\label{sec:dictionary}

Throughout this section $f\in F[T]$ is monic
separable (that is, its discriminant is nonzero) of degree $n$ with roots $r_{1},\dots,r_{n}\in\Fbar$, and $E=F[T]/(f)$ with
$\theta$ the class of $T$. We remark that $E$ is an \'etale algebra over $F$, not necessarily a field extension. In the following, we denote by $\tr(-)$ the trace $\tr_{E/F}(-)$ and we use $\chi_a(-)$ for the characteristic polynomial of multiplication by $a$ on $E$. For $h\in F[T]$ we define $a=h(\theta)$, so that 
\begin{equation}\label{eq:trsum}
  \tr(a^{k})=\sum_{i=1}^{n}h(r_{i})^{k},\qquad
  \chi_{a}(\lambda)=\prod_{i=1}^{n}\bigl(\lambda-h(r_{i})\bigr).
\end{equation}
For an in-depth discussion on this material see \cite[Ch. V, \S 8, no. 2]{Bourbaki1990}. 

We recall the power sums
\[
 p_k(a)=\sum_{i=1}^n \left( h(r_i)\right)^k=\tr(a^k)
\]
and we denote by $e_k$ the elementary symmetric functions so that
\[
 e_k(a)=\sum_{1 \leq i _1 < i_2 < \ldots < i_k \leq n } h(r_{i_1}) h(r_{i_2}) \cdots h(r_{i_k}). 
\]
Newton's identity $p_{3}=e_{1}^{3}-3e_{1}e_{2}+3e_{3}$ shows that $c_{1}(a)=c_{3}(a)=0$ if and only if $\tr(a)=\tr(a^{3})=0$.  For a detailed history on Newton's identity, see for instance \cite{Minac2003}. The following formula is classical. 

\begin{lemma}\label[lemma]{lem:euler}
For $0\le k\le n-2$, the sum $\displaystyle\sum_{i=1}^{n}\frac{r_{i}^{k}}{f'(r_{i})}=0$.
Consequently $\tr\bigl(u(\theta)/f'(\theta)\bigr)=0$ for every $u\in F[T]$ with
$\deg u\le n-2$.
\end{lemma}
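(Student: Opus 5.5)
The plan is to prove the identity by partial fractions (equivalently, the residue theorem), and then read off the trace statement from \eqref{eq:trsum}. Since $f$ is monic and separable of degree $n$ with distinct roots $r_1,\dots,r_n$, for any polynomial $u$ with $\deg u<n$ we have the Lagrange interpolation identity
\[
  u(T)=\sum_{i=1}^{n}u(r_i)\prod_{j\ne i}\frac{T-r_j}{r_i-r_j}
  =\sum_{i=1}^{n}\frac{u(r_i)}{f'(r_i)}\cdot\frac{f(T)}{T-r_i},
\]
using $f'(r_i)=\prod_{j\ne i}(r_i-r_j)$.

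\emph{Step 1: compare leading coefficients.} Each $f(T)/(T-r_i)$ is monic of degree $n-1$. Comparing the coefficient of $T^{n-1}$ on both sides gives
\[
  [T^{n-1}]\,u=\sum_{i=1}^{n}\frac{u(r_i)}{f'(r_i)}.
\]
Taking $u=T^k$ with $0\le k\le n-2$, the left side is $0$. This gives the first claim.

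As an alternative check, one can argue with residues. The rational function $T^k/f(T)$ has simple poles at the $r_i$, with residues $r_i^k/f'(r_i)$. At infinity it decays like $T^{k-n}$, and $k-n\le-2$, so the residue at infinity vanishes. The residue theorem then forces the sum of the finite residues to be $0$.

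\emph{Step 2: the trace statement.} Write $u=\sum_{k\le n-2}u_kT^k$. Linearity of the sum gives $\sum_i u(r_i)/f'(r_i)=0$. Now $f'(\theta)$ is invertible in $E$, because $f$ is separable: $f'(r_i)\neq0$ for every $i$, so its norm $\prod_i f'(r_i)$ is a nonzero multiple of the discriminant. Hence $u(\theta)/f'(\theta)=h(\theta)$ for some $h\in F[T]$ with $h(r_i)=u(r_i)/f'(r_i)$; concretely, $h=u\cdot w$ where $w$ is an inverse of $f'$ modulo $f$. By \eqref{eq:trsum} with $k=1$,
\[
  \tr\bigl(u(\theta)/f'(\theta)\bigr)=\sum_{i=1}^{n}\frac{u(r_i)}{f'(r_i)}=0.
\]

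The argument has no real obstacle. The only point needing care is the justification in Step 2 that the quotient by $f'(\theta)$ is a legitimate element of $E$ whose conjugate values at the roots are $u(r_i)/f'(r_i)$, which is exactly the invertibility of $f'$ modulo $f$ noted above.
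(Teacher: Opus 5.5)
Your proof is correct and follows the paper's route. Your Lagrange interpolation identity is the paper's partial-fraction decomposition of $u/f$ multiplied through by $f$, and comparing coefficients of $T^{n-1}$ is the purely algebraic form of the paper's step of multiplying by $T$ and letting $T\to\infty$; the trace statement is then deduced exactly as in the paper, by writing $u(\theta)/f'(\theta)=h(\theta)$ with $h\equiv u\,w \pmod f$ for an inverse $w$ of $f'$ modulo $f$ and applying \eqref{eq:trsum}.
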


\begin{proof}
Partial fractions give 
\[\dfrac{T^{k}}{f(T)}=\sum_{i=1}^n\dfrac{r_{i}^{k}}{f'(r_{i})}\cdot\dfrac{1}{T-r_{i}}.\]
We multiply by $T$ and let $T\to\infty$. The left side tends to $0$ when $k\le n-2$, and the right
side to $\sum_{i}r_{i}^{k}/f'(r_{i})$. The second statement follows from \eqref{eq:trsum} applied to $h\equiv u\tilde{h} \pmod f$, where $\tilde{h}\in F[T]$ satisfies $\tilde{h}f'\equiv1\pmod f$. We know such a $\tilde{h}$ exists because $\gcd(f,f')=1$.
\end{proof}

Thus every $u$ of degree $\le n-2$ produces an element $a=u(\theta)/f'(\theta)$ with
$\tr(a)=0$ (and conversely every trace-zero element arises this way, since
the map $u \mapsto u(\theta)/f'(\theta)$ is injective and both spaces have dimension $n-1$ over $F$, as the trace form of an étale algebra is nondegenerate). The whole problem is
to choose $u$, as a function of the coefficients of $f$, so that also $\sum_{i}y_{i}^{3}=0$
for $y_{i}=u(r_{i})/f'(r_{i})$. The appropriate choice of $u$ will be derived using classical invariant theory. 

\section{The septic covariant}\label{sec:covariant}

A \emph{binary form} of order $m$ over $F$ is a homogeneous polynomial
$\varphi(X,Z)=\sum_{i=0}^{m}\varphi_{i}X^{m-i}Z^{i}\in F[X,Z]$. We identify a monic $f(T)$ of degree
$n$ with the form $f(X,Z)=\prod_{i}(X-r_{i}Z)$, so that $f(T,1)=f(T)$. A \emph{covariant} of
the binary $n$-ic of coefficient degree $d$ and order $m$ is a polynomial $\Phi(a;X,Z)$,
homogeneous of degree $d$ in the coefficients $a=(a_{0},\dots,a_{n})$ and of degree $m$ in
$(X,Z)$, such that $\Phi(g\cdot a;X,Z)=\Phi(a;g^{-1}(X,Z))$ for all $g\in\SL_{2}$ over any extension of $F$, where
$(g\cdot f)(X,Z)=f(g^{-1}(X,Z))$. In words: a covariant attaches to a configuration of $n$
points of the projective line
 a configuration of $m$ points, compatibly with projective transformations. We write $\Cov_{d,m}(n)$ for the space of these, and
$\Inv_{d}(n)=\Cov_{d,0}(n)$. 

\begin{definition}[Transvectant]\label[definition]{def:tv}
For forms $A$ of order $\mu$ and $B$ of order $\nu$ and $0\le r\le\min(\mu,\nu)$, define the (unnormalised) transvectant
\[
  (A,B)_{r}=\sum_{j=0}^{r}(-1)^{j}\binom rj\,
  \frac{\partial^{r}A}{\partial X^{r-j}\partial Z^{j}}\cdot
  \frac{\partial^{r}B}{\partial X^{j}\partial Z^{r-j}} .
\]
\end{definition}

The transvectant of two covariants is a covariant of order $\mu+\nu-2r$ and coefficient
degree the sum of the two degrees; $(A,B)_{r}=(-1)^{r}(B,A)_{r}$, so $(f,f)_{r}=0$ for odd
$r$. Two
classical facts will be used \cite{Olver1999,Springer1977}: transvectants of covariants are covariants, and the dimensions are given by the Cayley--Sylvester formula; the weight $w$ below is the bookkeeping index $(nd-m)/2$.

\begin{equation}\label{eq:CS}
  \dim\Cov_{d,m}(n)=N(n,d,w)-N(n,d,w-1),\qquad w=\tfrac{nd-m}{2},
\end{equation}

where $N(n,d,w)$ is the number of multisets $\{i_{1}\le\dots\le i_{d}\}\subseteq\{0,\dots,n\}$
with $\sum i_{j}=w$ (the dimension is $0$ if $nd-m$ is odd).

With $n=7$ the members of the chain \eqref{eq:chain} have (degree, order)
\[
  D_{1}:(2,6),\ \ D_{2}:(2,2),\ \ D_{3}:(2,10),\ \ T_{1}:(3,5),\ \ T_{5}:(3,7),\ \
  C_{1}:(4,0),\ \ C_{7}:(4,6),\ \ P_{5}:(5,3),
\]
so that $Q_{f}=7D_{2}P_{5}-10C_{1}T_{1}$ has bidegree $(7,5)$. Using  \eqref{eq:CS}, we remark that
\begin{equation} \label{eq:dims}
\begin{aligned}
  \dim\Cov_{2,2}(7) &= 4-3 = 1, & \dim\Cov_{3,5}(7)&=9-8=1,\\
  \dim\Inv_{4}(7)&=24-23=1,& \dim\Cov_{5,3}(7)&=48-46=2 .
\end{aligned}
\end{equation}
The quadratic covariant $D_{2}$ is, up to a scalar, the unique one of degree two. More precisely
\begin{equation}\label{eq:D2}
  D_{2}=207360\,(A X^{2}+B XZ+C Z^{2}),\qquad
  \begin{aligned}
   A&=35a_{0}a_{6}-10a_{1}a_{5}+5a_{2}a_{4}-2a_{3}^{2},\\
   B&=245a_{0}a_{7}-25a_{1}a_{6}+5a_{2}a_{5}-a_{3}a_{4},\\
   C&=35a_{1}a_{7}-10a_{2}a_{6}+5a_{3}a_{5}-2a_{4}^{2}.
  \end{aligned}
\end{equation}

Write the roots as vectors
$v_{i}=(x_{i},z_{i})$, so $f=\prod_{i}(z_{i}X-x_{i}Z)$, put $[v,w]=x_{v}z_{w}-x_{w}z_{v}$
and, for $Q\in\Cov_{d,n-2}(n)$, $\widetilde y_{i}=Q(v_{i})/\prod_{j\ne i}[v_{i},v_{j}]$; when
all $z_{i}=1$ this is $y_{i}=Q(r_{i},1)/f'(r_{i})$.

\begin{lemma}\label[lemma]{lem:inv}
Let $Q\in\Cov_{d,n-2}(n)$.
\begin{enumerate}[label=\textup{(\alph*)},leftmargin=2.2em]
\item Rescaling $v_{i}\mapsto\lambda_{i}v_{i}$ multiplies every $\widetilde y_{j}$ by the same
  scalar $(\prod_{i}\lambda_{i})^{d-1}$; replacing every $v_{i}$ by $gv_{i}$ with $g\in\SL_{2}$
  leaves every $\widetilde y_{i}$ unchanged. Consequently, over $\Fbar$, a projective change
  of coordinates of the root configuration replaces $(y_{1},\dots,y_{n})$ by a permutation
  of itself times one common nonzero scalar, and the conditions $\sum y_{i}=0$,
  $\sum y_{i}^{3}=0$ are $\PGL_{2}(\Fbar)$-invariant.
\item For monic separable $f$, $e_{3}(y)\cdot\prod_{i}f'(r_{i})=\sum_{|S|=3}\prod_{i\in S}Q(r_{i},1)\prod_{i\notin S}f'(r_{i})$
  is a symmetric polynomial in the roots, hence a polynomial in $a_{1},\dots,a_{n}$; and
  $\prod_{i}f'(r_{i})=\pm\Disc(f)$.
\end{enumerate}
\end{lemma}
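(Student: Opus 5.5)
The plan is to make both parts come out of homogeneity bookkeeping, by writing everything in terms of the root vectors $v_{i}$. The coefficients $a_{k}$ of $f=\prod_{i}(z_{i}X-x_{i}Z)$ are multilinear in $v_{1},\dots,v_{n}$: each $a_{k}$ is homogeneous of degree one in each $v_{i}$ separately. Since $Q$ is homogeneous of degree $d$ in the $a_{k}$ and of degree $n-2$ in $(X,Z)$, the whole computation reduces to counting degrees.

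\emph{Part (a), rescaling.} I will compare numerator and denominator of $\widetilde y_{i}$ under $v_{j}\mapsto\lambda_{j}v_{j}$.
\begin{itemize}
\item The coefficient vector $a$ is multiplied by $\prod_{j}\lambda_{j}$. Hence $Q(a;\cdot)$ acquires the factor $(\prod_{j}\lambda_{j})^{d}$.
\item Evaluating at $\lambda_{i}v_{i}$ contributes a further $\lambda_{i}^{n-2}$, because $Q$ has order $n-2$.
\item The denominator $\prod_{j\ne i}[\lambda_{i}v_{i},\lambda_{j}v_{j}]$ acquires $\lambda_{i}^{n-1}\prod_{j\ne i}\lambda_{j}=\lambda_{i}^{n-2}\prod_{j}\lambda_{j}$, since the bracket is bilinear.
\end{itemize}
The ratio therefore scales by $(\prod_{j}\lambda_{j})^{d-1}$, which is independent of $i$.

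\emph{Part (a), $\SL_{2}$-invariance.} Two facts are needed.
\begin{itemize}
\item The bracket is $\SL_{2}$-invariant: $[gv,gw]=\det(g)[v,w]=[v,w]$.
\item Each factor satisfies $z_{i}X-x_{i}Z=\pm[(X,Z),v_{i}]$. So the form built from the vectors $gv_{i}$ is $\prod_{i}\pm[(X,Z),gv_{i}]=\prod_{i}\pm[g^{-1}(X,Z),v_{i}]=f(g^{-1}(X,Z))$, i.e.\ it has coefficient vector $g\cdot a$.
\end{itemize}
The covariance property then gives $Q(g\cdot a;gv_{i})=Q(a;g^{-1}gv_{i})=Q(a;v_{i})$, so $\widetilde y_{i}$ is unchanged.

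\emph{Part (a), projective changes.} I check first the normalisation: for monic $f$ with $v_{i}=(r_{i},1)$ one has $f=\prod_{i}(X-r_{i}Z)$ and $\prod_{j\ne i}[v_{i},v_{j}]=\prod_{j\ne i}(r_{i}-r_{j})=f'(r_{i})$, so $\widetilde y_{i}=y_{i}$. Now take an element of $\PGL_{2}(\Fbar)$. Over the algebraically closed field $\Fbar$ it lifts to some $h\in\GL_{2}$, and $h=cg$ with $g\in\SL_{2}$ and $c^{2}=\det h$. The new root vectors $hv_{i}$ are scalar multiples of the normalised vectors $(r_{i}',1)$ of the transformed configuration; if some root moves to $\infty$, I use the homogeneous $\widetilde y_{i}$ directly. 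The transformed roots are indexed by the old ones, so the $y$'s are the old ones up to the labelling. Combining the rescaling and $\SL_{2}$ steps, they change by one common nonzero factor. Both $\sum y_{i}$ and $\sum y_{i}^{3}$ are homogeneous and symmetric, so their vanishing is preserved.

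\emph{Part (b).} I clear denominators in $e_{3}(y)=\sum_{|S|=3}\prod_{i\in S}Q(r_{i},1)/f'(r_{i})$ by multiplying by $\prod_{i}f'(r_{i})$. This gives the stated sum, which is a polynomial in $a_{1},\dots,a_{n}$ and $r_{1},\dots,r_{n}$ with coefficients in $F$. A permutation of the roots permutes the index sets $S$, and it fixes the $a_{k}$ because they are symmetric in the roots. So the expression is a symmetric polynomial in the roots. By the fundamental theorem on symmetric polynomials it is therefore a polynomial in the elementary symmetric functions, which are $\pm a_{k}$. Finally,
\[
\prod_{i}f'(r_{i})=\prod_{i\ne j}(r_{i}-r_{j})=(-1)^{n(n-1)/2}\prod_{i<j}(r_{i}-r_{j})^{2}=\pm\Disc(f).
\]

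The only delicate step is the sign and convention bookkeeping showing that the form attached to $gv_{i}$ is exactly $g\cdot f$. That computation is also what confirms that the exponent $d-1$ is the same for every $i$.
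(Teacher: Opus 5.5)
Your proposal is correct and follows the paper's proof essentially step for step: the same degree count $\Lambda^{d}\lambda_i^{n-2}/(\lambda_i^{n-2}\Lambda)=\Lambda^{d-1}$ for rescaling, $\SL_2$-invariance of brackets combined with covariance of $Q$, the decomposition of a $\GL_2(\Fbar)$ element as a scalar times an $\SL_2$ element, and clearing denominators in $e_3(y)$ for part (b). You also make explicit two points the paper leaves implicit: that the form attached to the vectors $gv_i$ is exactly $g\cdot f$ (via $z_iX-x_iZ=[(X,Z),v_i]$), and the sign $(-1)^{n(n-1)/2}$ relating $\prod_i f'(r_i)$ to $\Disc(f)$.
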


\begin{proof}
\begin{enumerate}[label=\textup{(\alph*)},leftmargin=2.2em]
\item Rescaling multiplies $f$ by $\Lambda=\prod\lambda_{i}$, hence $Q$ by $\Lambda^{d}$. Evaluating the order-$(n-2)$ form $Q$ at $\lambda_{j}v_{j}$ contributes $\lambda_{j}^{n-2}$,
while $\prod_{i\ne j}[\lambda_{j}v_{j},\lambda_{i}v_{i}]=\lambda_{j}^{n-1}\Lambda\lambda_{j}^{-1}\prod_{i\ne j}[v_{j},v_{i}]$
contributes $\lambda_{j}^{n-2}\Lambda$. The ratio is $\Lambda^{d-1}$. Brackets are
$\SL_{2}$-invariant and $Q$ is covariant, which gives the second statement. Over $\Fbar$
every $g\in\GL_{2}$ is a scalar times an element of $\SL_{2}$, and renormalising the
transformed form to be monic is a rescaling. 
\item In each monomial of $e_{3}(y)$ the three
indices are distinct, so multiplying by $\prod_{i}f'(r_{i})$ clears every denominator once. Hence we have a polynomial in $a_1, \cdots, a_n$, and the symmetry is clear.  \qedhere
\end{enumerate}
\end{proof}

\section{Proof of the septic identity}\label{sec:proof}

We work over an algebraically closed field $\Fbar$. For
$u,v\in\Fbar[T]$ let $W(u,v)=uv'-u'v$ be the Wronskian; note $W(u,v)=u^{2}(v/u)'$. We begin our investigation by computing residues of polynomials. 

\begin{lemma}\label[lemma]{lem:residues}
Let $f$ be separable of degree $n$ with roots $r_{i}$, and let $u,g\in\Fbar[T]$.
\begin{enumerate}[label=\textup{(\alph*)},leftmargin=2.2em]
\item $\res_{T=r_{i}}\dfrac{u\,dT}{f}=\dfrac{u(r_{i})}{f'(r_{i})}$ and
  $\res_{T=r_{i}}\dfrac{g\,dT}{f^{2}}=\dfrac{W(f',g)(r_{i})}{f'(r_{i})^{3}}$.
\item If $\deg g\le2n-2$, then $\displaystyle\sum_{i=1}^{n}\frac{W(f',g)(r_{i})}{f'(r_{i})^{3}}=0$.
\end{enumerate}
\end{lemma}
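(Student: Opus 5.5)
Part (a) reduces to two local Laurent-expansion computations at a simple zero $r=r_i$ of $f$; part (b) then follows from the residue theorem on $\PP^1$ once the residue at infinity is shown to vanish.

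\emph{The simple pole.} Since $f$ is separable, $r$ is a simple zero of $f$ and $f'(r)\neq0$. Hence $u/f$ has at most a simple pole at $r$, with residue
\[
\lim_{T\to r}(T-r)\frac{u(T)}{f(T)}=\frac{u(r)}{f'(r)}.
\]

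\emph{The double pole.} Here I would use the standard formula for a double pole:
\[
\res_{T=r}\frac{g}{f^{2}}=\frac{d}{dT}\Bigl[(T-r)^{2}\frac{g}{f^{2}}\Bigr]_{T=r}.
\]
Write $f=(T-r)h$ with $h(r)=f'(r)$. Differentiating $f=(T-r)h$ gives $f'=h+(T-r)h'$ and $f''=2h'+(T-r)h''$, so $h'(r)=f''(r)/2$. The residue is
\[
\bigl(g/h^{2}\bigr)'(r)=\frac{g'(r)h(r)-2g(r)h'(r)}{h(r)^{3}}=\frac{f'(r)g'(r)-f''(r)g(r)}{f'(r)^{3}}.
\]
This equals $W(f',g)(r)/f'(r)^{3}$, since $W(f',g)=f'g'-f''g$. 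The only care needed is the factor $2$ in $h'(r)=f''(r)/2$, which exactly cancels the $2$ coming from differentiating $h^{-2}$.

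\emph{The residue theorem.} For (b), apply the residue theorem on $\PP^{1}$ to the rational differential $\omega=g\,dT/f^{2}$. Its only finite poles are the roots $r_i$, so by (a) the sum in question equals $\sum_i\res_{r_i}\omega=-\res_{\infty}\omega$.

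\emph{The residue at infinity.} Set $T=1/s$, so $dT=-s^{-2}ds$ and
\[
\omega=-\frac{g(1/s)}{f(1/s)^{2}}\,s^{-2}\,ds.
\]
Here $g(1/s)=O(s^{-\deg g})$ and $f(1/s)^{-2}$ has exact order $s^{2n}$. So $\omega$ has order at least $2n-\deg g-2\ge0$ at $s=0$ when $\deg g\le 2n-2$. It is therefore holomorphic at $\infty$, and $\res_\infty\omega=0$. Equivalently, in the partial fraction expansion of $g/f^{2}$ the sum of the coefficients of the $(T-r_i)^{-1}$ terms is the coefficient of $T^{-1}$ at infinity, and that coefficient vanishes because $\deg g-2n\le-2$.

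I expect no real obstacle; the main step to verify is the degree bookkeeping at infinity. To keep the argument purely algebraic, valid over $\Fbar$ of characteristic zero, I would phrase the residue theorem as the statement that the residues of a rational function's partial fraction decomposition sum to minus its $T^{-1}$ coefficient at infinity.
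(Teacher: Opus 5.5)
Your proposal is correct and follows the paper's proof essentially verbatim: the same factorisation $f=(T-r)h$ with $f''(r)=2h'(r)$, giving the residue $(g/h^{2})'(r)$, and the same residue-theorem argument in which the residue at infinity vanishes via $s=1/T$ when $\deg g\le 2n-2$. The only difference is that you spell out the order count $2n-\deg g-2\ge 0$ and the equivalent partial-fraction formulation, which the paper leaves implicit.
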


\begin{proof}
\begin{enumerate}[label=\textup{(\alph*)},leftmargin=2.2em]
\item Write $f=(T-r)v$ with $v(r)\ne0$; then $f'(r)=v(r)$ and $f''(r)=2v'(r)$. The residue of
$u/f$ at $r$ is $u(r)/v(r)$. The residue of $g/f^{2}=(T-r)^{-2}g/v^{2}$ at $r$ is
$(g/v^{2})'(r)=\bigl(g'(r)v(r)-2g(r)v'(r)\bigr)/v(r)^{3}=\bigl(f'g'-f''g\bigr)(r)/f'(r)^{3}$.
\item The residues of a rational differential on $\PP^{1}$ sum to zero. The finite poles of
$g\,dT/f^{2}$ are the roots of $f$, with residues given by (a); at infinity, $g/f^{2}=O(T^{-2})$
when $\deg g\le2n-2$, so with $s=1/T$ the differential is $-g(1/s)s^{-2}f(1/s)^{-2}ds$, a
power series in $s$ times $ds$, with residue $0$. \qedhere
\end{enumerate}
\end{proof}

As a result of the previous lemma, we need to determine $g$ with $\deg g\le12 = 2(7)-2$ and
$W(f',g)\equiv q^{3}\pmod f$ for some $q$. We will show in future work that such a $g$ exists if and only if the identity holds.  The difficulty in finding $g$ is that
$Q_{f}$, expanded in $a_0, \ldots, a_7$, has $780$ terms. We put $D_2$ in normal form, which will enable us to simplify $Q_f$. 

\begin{lemma} \label[lemma]{lem:normal}
On the dense open set of septics for which $D_{2}$ has two distinct roots, a projective
change of variables puts $D_{2}$ in the form $\lambda XZ$, $\lambda\ne0$; equivalently
$A=C=0$ in \eqref{eq:D2}, and then $\lambda=207360\,B$.
\end{lemma}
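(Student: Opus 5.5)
The plan is to use the covariance of $D_{2}$ together with the transitivity of $\PGL_{2}(\Fbar)$ on pairs of distinct points of $\PP^{1}$. First, the condition "$D_{2}$ has two distinct roots" is the nonvanishing of its discriminant $B^{2}-4AC$. This is a polynomial in $a_{0},\dots,a_{7}$, so the locus is Zariski open. To see that it is dense (that is, nonempty in the irreducible affine space of septics), one example with $B^{2}-4AC\neq0$ suffices. For instance $f=X^{7}+XZ^{6}$ or $f=X^{7}+Z^{7}$: for the latter $a_{0}=a_{7}=1$ and all other coefficients vanish, so $A=C=0$ and $B=245\neq0$ by \eqref{eq:D2}. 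This example is already in normal form, which shows the open set is nonempty.

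Next, given $f$ with $D_{2}$ having distinct roots $[\alpha_{1}:\gamma_{1}]\neq[\alpha_{2}:\gamma_{2}]$ in $\PP^{1}(\Fbar)$, I would choose $g\in\SL_{2}(\Fbar)$ sending these two points to $0=[0:1]$ and $\infty=[1:0]$. Such a $g$ exists because $\PGL_{2}$ acts doubly transitively on $\PP^{1}$, and over $\Fbar$ any element of $\GL_{2}$ can be rescaled into $\SL_{2}$. By the covariance property $D_{2}(g\cdot a;X,Z)=D_{2}(a;g^{-1}(X,Z))$, the roots of $D_{2}$ for the transformed septic $g\cdot f$ are the $g$-images of the old roots, namely $0$ and $\infty$. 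A binary quadratic vanishing at $[0:1]$ and $[1:0]$ has no $X^{2}$ and no $Z^{2}$ term, so it equals $\lambda XZ$. Here $\lambda\neq0$, since otherwise $D_{2}$ would vanish identically, contradicting the assumption that it has two distinct roots; the discriminant is preserved up to a nonzero factor under $\SL_{2}$, indeed it is invariant. Reading off \eqref{eq:D2} for the new coefficients then gives $A=C=0$ and $\lambda=207360\,B$.

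Conversely, if $A=C=0$ and $B\neq0$, then $D_{2}=207360\,B\,XZ$ has the distinct roots $0$ and $\infty$; this gives the stated equivalence. If the paper wants the normalized form to remain monic in $T$ (so that \Cref{lem:inv}(a) applies to $y_{i}$), I would note that we are free to rescale afterwards by a diagonal matrix $\mathrm{diag}(t,t^{-1})$. This keeps $0$ and $\infty$ fixed, so it preserves $A=C=0$. Combined with \Cref{lem:inv}(a), the transformation changes the $y_{i}$ only by a permutation and a common scalar.

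The main obstacle I expect is a mild one: checking that after moving the roots the leading coefficient $a_{0}$ of the transformed septic is nonzero, i.e.\ that $\infty$ is not a root of $f$, if one insists on a monic model. I would handle this by shrinking the open set to require that neither root of $D_{2}$ is a root of $f$. This condition is again open, since it is the nonvanishing of the resultant $\Res(f,D_{2})$, and it is nonempty by the example $X^{7}+Z^{7}$, where $f(0,1)=f(1,0)=1\neq0$.
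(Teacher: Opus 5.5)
Your proof is correct and follows the paper's route: you use the covariance of $D_{2}$ and the double transitivity of $\SL_{2}(\Fbar)$ on $\PP^{1}$ to move its two roots to $0$ and $\infty$, so the transformed quadratic is a nonzero multiple of $XZ$ and \eqref{eq:D2} gives $A=C=0$, $\lambda=207360\,B$. Your added remarks on openness, the converse, and the monic normalisation via $\Res(f,D_{2})\ne0$ agree with what the paper does in the proof of \Cref{thm:identity}.

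One small slip: $X^{7}+XZ^{6}$ does not show the open set is nonempty. There $A=35$ and $B=C=0$, so $D_{2}$ is a multiple of $X^{2}$ with a double root. Your other example $X^{7}+Z^{7}$, with $B=245$, does the job.
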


\begin{proof}
Move the two roots of the binary quadratic $D_{2}$ to $0$ and $\infty$; since $D_{2}$ is a
covariant, the transformed quadratic is a nonzero multiple of $XZ$.
\end{proof}

Let $\Dop=T\,\frac{d}{dT}$ be the Euler operator and $\Lop=\Dop(7-\Dop)$. We remark that for $0 \leq m \leq 7$, each monomial $T^m$ is an eigenvector of $\Lop$ as $\Lop T^{m}=m(7-m)T^{m}$. The corresponding eigenvalues are $0,6,10$, and $12$, each occurring with multiplicity $2$.

\begin{lemma}\label[lemma]{lem:frame}
For a septic $f(T)=\sum_{i}a_{i}T^{7-i}$ put $q=f'-Tf''+\tfrac13T^{2}f'''-\tfrac1{24}T^{3}f''''$.
Then
\[
  Tq=-\tfrac1{24}\Lop(\Lop-10)f\qquad\text{and}\qquad q=a_{1}T^{5}-a_{3}T^{3}-a_{4}T^{2}+a_{6}.
\]
\end{lemma}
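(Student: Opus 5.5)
The plan is to verify both identities monomial by monomial, since every operator involved acts diagonally on the monomial basis $\{T^m\}_{0\le m\le 7}$. Write $f=\sum_{i=0}^7 a_iT^{7-i}$ and work with $T^m$, $m=7-i$.

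\emph{Second identity.} For $h=T^m$ we have $f^{(k)}$-type terms $T^k h^{(k+1)}=m(m-1)\cdots(m-k)\,T^{m-1}$. Hence
\[
q\big|_{T^m}=\Bigl(m-m(m-1)+\tfrac13 m(m-1)(m-2)-\tfrac1{24}m(m-1)(m-2)(m-3)\Bigr)T^{m-1}.
\]
Call the bracket $c(m)$. I will evaluate $c(m)$ at $m=0,\dots,7$. This is a routine check, and a quartic polynomial in $m$ can be simplified first. The expected values are $c(m)=1$ for $m=1,6$; $c(m)=-1$ for $m=3,4$; and $c(m)=0$ for $m=0,2,5,7$. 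For instance $c(1)=1$ and $c(2)=2-2=0$. Also $c(6)=6-30+40-15=1$ and $c(7)=7-42+70-35=0$.

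Since $T^m$ carries the coefficient $a_{7-m}$ and contributes $T^{m-1}$, the coefficients assemble as follows: $m=6$ gives $a_1T^5$, $m=4$ gives $-a_3T^3$, $m=3$ gives $-a_4T^2$, and $m=1$ gives $a_6T^0$. This yields $q=a_1T^5-a_3T^3-a_4T^2+a_6$.

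\emph{First identity.} Since $\Lop T^m=m(7-m)T^m$, we get
\[
-\tfrac1{24}\Lop(\Lop-10)T^m=-\tfrac1{24}\,m(7-m)\bigl(m(7-m)-10\bigr)T^m.
\]
The eigenvalue $\ell=m(7-m)$ takes the values $0,6,10,12,12,10,6,0$ for $m=0,\dots,7$. So the scalar $-\ell(\ell-10)/24$ is $0$ for $m\in\{0,2,5,7\}$, equals $1$ for $\ell=6$ (so $m=1,6$), and equals $-1$ for $\ell=12$ (so $m=3,4$).

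On the other side, $T\cdot c(m)T^{m-1}=c(m)T^m$. The two identities therefore agree as soon as $c(m)=-\ell(\ell-10)/24$ for all $m\in\{0,\dots,7\}$, which is exactly what the table shows. Both sides are linear in $f$, so agreement on the basis proves $Tq=-\tfrac1{24}\Lop(\Lop-10)f$.

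\emph{Main obstacle.} There is no conceptual obstacle here; the only risk is arithmetic. I would double-check $c(m)$ at all eight values, or alternatively confirm symbolically that $c(m)=-\tfrac1{24}m(7-m)\bigl(m(7-m)-10\bigr)$ on $\{0,\dots,7\}$. The two sides cannot be identical quartics in $m$, since their leading coefficients are $-\tfrac1{24}$ and $-\tfrac1{24}$ with different lower terms. The equality is therefore only pointwise on $0\le m\le7$, which is why the finite check is needed rather than a polynomial identity.
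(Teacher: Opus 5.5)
Your proof is correct and is essentially the paper's argument: both sides act diagonally on the monomials $T^m$, so one compares eigenvalues, and your values $c(m)=0,1,0,-1,-1,0,1,0$ for $m=0,\dots,7$ are right. One correction to your closing remark: the two quartics in $m$ are in fact identical, both equal to $-\tfrac1{24}m(m-2)(m-5)(m-7)$, which is how the paper states the check. Your own table already forces this, since two polynomials of degree at most four that agree at eight points must coincide.
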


\begin{proof}
Since both operators are linear, it suffices to confirm the equation on monomials. We compute that the eigenvalue of $T^m$ on the left side is  
\[ 
m-m(m-1)+\tfrac13m(m-1)(m-2)-\tfrac1{24}m(m-1)(m-2)(m-3)=-\tfrac1{24}m(m-2)(m-5)(m-7).
\]
Similarly, on the right side, we get the eigenvalue
\[m(7-m)\bigl(m(7-m)-10\bigr)=m(m-2)(m-5)(m-7).\]
The two sides therefore agree, the factor $-\tfrac1{24}$ in
the statement accounting for the difference. The second equation follows after computing the above eigenvalues for $m=7,6,\ldots, 0$. 
\end{proof}

The coefficients $a_{1},a_{3},a_{4},a_{6}$ that survive in $q$ we call \emph{visible},
the others \emph{invisible}; the operator $\Lop(\Lop-10)$ kills exactly the eigenspaces with eigenvalues
$0$ and $10$, that is, the monomials $T^0, T^2, T^5$ and $T^7$. The hypothesis $D_{2}=\lambda XZ$ says nothing directly about $T_{1},C_{1},P_{5}$, which
are built from $D_{1}$ and $D_{3}$. The following three identities express them through
$D_{2}$.

\begin{lemma}\label[lemma]{prop:recoup}
For every binary septic, identically in $a_{0},\dots,a_{7}$,
\[
  T_{1}=-20\,(f,D_{2})_{2},\qquad C_{1}=\tfrac{21}{2}\,(D_{2},D_{2})_{2},\qquad
  P_{5}=-\tfrac{25}{2}\,(f,D_{2}^{2})_{4}.
\]
Consequently, with $H=D_{2}$\footnote{Here $H$ denotes the quadratic covariant $D_2$, of
order two. It is not the Hessian, which for a binary septic is
$(f,f)_2=D_3$, of order ten.}  and $H_{0}=H/207360=AX^{2}+BXZ+CZ^{2}$,
\[
  Q_{f}=\tfrac{175}{2}\bigl(24\,(H,H)_{2}\,(f,H)_{2}-H\,(f,H^{2})_{4}\bigr),\qquad
  \widehat Q_{f}=\tfrac{1}{2880}\bigl(24\,(H_{0},H_{0})_{2}\,(f,H_{0})_{2}-H_{0}\,(f,H_{0}^{2})_{4}\bigr).
\]
\end{lemma}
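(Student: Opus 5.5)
The plan is to use the one-dimensionality of the relevant covariant spaces, recorded in \eqref{eq:dims}, and then fix the scalars by evaluating on a single convenient septic.

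\emph{First identity.} Both $T_{1}=(f,D_{1})_{4}$ and $(f,D_{2})_{2}$ are covariants of the binary septic of bidegree $(3,5)$, because transvectants of covariants are covariants. By \eqref{eq:dims}, $\dim\Cov_{3,5}(7)=1$. Hence $T_{1}=\kappa\,(f,D_{2})_{2}$ for some rational $\kappa$, provided $(f,D_{2})_{2}\not\equiv0$.

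\emph{Second identity.} In the same way, $C_{1}=(f,T_{5})_{7}$ and $(D_{2},D_{2})_{2}$ both lie in $\Inv_{4}(7)$, which is one-dimensional. Here $(D_{2},D_{2})_{2}$ is a multiple of the discriminant $B^{2}-4AC$ of $H_{0}$, so it is visibly nonzero.

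\emph{Third identity.} Here $P_{5}$ and $(f,D_{2}^{2})_{4}$ lie in $\Cov_{5,3}(7)$, which has dimension $2$, so proportionality is not automatic. I would first rewrite $P_{5}=(f,C_{7})_{5}$ with $C_{7}=(f,T_{1})_{3}$. The first identity gives
\[
C_{7}=-20\,\bigl(f,(f,D_{2})_{2}\bigr)_{3}.
\]
The next step is to show that $\bigl(f,(f,D_{2})_{2}\bigr)_{3}$ and $(f,D_{2}^{2})_{4}$, after one more transvection with $f$, are proportional. A cleaner route is to choose a basis of $\Cov_{5,3}(7)$, for instance $(f,D_{2}^{2})_{4}$ and $D_{2}\cdot(f,D_{2})_{3}$-type products, or $\bigl((f,D_{2})_{2},(f,f)_{6}\bigr)$-style terms. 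I would then determine the coordinates of $P_{5}$ by evaluating at two or three explicit septics. The expected obstacle is the coordinate on the second basis vector: showing it vanishes is the genuinely nontrivial part, and I would settle it by direct computation.

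Concretely, a covariant is determined by its leading coefficient (the source, the coefficient of $X^{m}$), which is a semi-invariant. So I would compare sources of $P_{5}$ and $(f,D_{2}^{2})_{4}$ as polynomials in $a_{0},\dots,a_{7}$ of weight $w=(35-3)/2=16$. This is a finite polynomial identity, checkable symbolically.

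\emph{Constants.} For the scalars $-20$, $\tfrac{21}{2}$, $-\tfrac{25}{2}$, I would evaluate both sides on a sparse test septic, say $f=X^{7}+XZ^{6}$ or $f=X^{6}Z+XZ^{6}$. For such an $f$, $D_{2}$ is given by \eqref{eq:D2}: for $f=X^{6}Z+XZ^{6}$ one has $a_{1}=a_{6}=1$, so $B=-25$ and $A=C=0$. This also fits the normal form of \Cref{lem:normal}. The transvectants then reduce to a handful of monomials.

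\emph{Consequences.} Once the three identities hold, substitute them into $Q_{f}=7D_{2}P_{5}-10C_{1}T_{1}$:
\[
Q_{f}=-\tfrac{175}{2}\,H(f,H^{2})_{4}+2100\,(H,H)_{2}(f,H)_{2}.
\]
Since $2100=\tfrac{175}{2}\cdot24$, this is the first displayed formula. For $\widehat Q_{f}$, put $H=207360\,H_{0}$. Both terms are cubic in $H$, so
\[
Q_{f}=\tfrac{175}{2}\,207360^{3}\bigl(24\,(H_{0},H_{0})_{2}(f,H_{0})_{2}-H_{0}(f,H_{0}^{2})_{4}\bigr).
\]
Dividing by $\cont=2^{32}3^{14}5^{6}\cdot7$ gives the scalar $\tfrac{175}{2}\cdot207360^{3}/\cont$. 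With $207360=2^{9}3^{4}5$, this equals $\tfrac{5^{2}\cdot7\cdot2^{27}3^{12}5^{3}}{2\cdot2^{32}3^{14}5^{6}\cdot7}=\tfrac{1}{2^{6}3^{2}5}=\tfrac{1}{2880}$, as claimed.
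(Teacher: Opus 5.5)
Your treatment of the first two identities and of the consequences is the paper's argument. That includes the coefficients $-\tfrac{175}{2}$ and $2100=\tfrac{175}{2}\cdot24$, and the scalar $\tfrac{175}{2}\cdot207360^{3}/\cont=\tfrac{1}{2880}$.

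The gap is in the third identity, the only nontrivial one. You correctly note that $\Cov_{5,3}(7)$ is two-dimensional. But your first idea, transvecting $C_7=-20\bigl(f,(f,D_2)_2\bigr)_3$ once more with $f$ and showing it is proportional to $(f,D_2^2)_4$, just restates the claim. Neither of your candidate second basis vectors works:
\begin{itemize}
\item The transvectant $(f,D_2)_3$ is undefined, since $D_2$ has order $2$. More to the point, $\Cov_{5,3}(7)$ contains no nonzero products of lower covariants at all. A degree-one factor must be $f$, of order $7>3$. The only other splittings, $(2,0)+(3,3)$ and $(2,2)+(3,1)$, require $\Inv_{2}(7)$ or $\Cov_{3,1}(7)$, and both are zero by \eqref{eq:CS}.
\item Your other candidate is $\bigl((f,D_2)_2,D_2\bigr)_2$, where the index $2$ is forced by the order. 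It is proportional to $U=(f,D_2^2)_4$. Indeed $\bigl((X^mZ^{7-m},XZ)_2,XZ\bigr)_2=4m(m-1)(7-m)(6-m)X^{m-2}Z^{5-m}$, which is exactly one sixth of $(X^mZ^{7-m},X^2Z^2)_4$. Hence $\bigl((f,H)_2,H\bigr)_2=\tfrac16(f,H^2)_4$ for every quadratic $H$, by covariance and density.
\end{itemize}
With either choice you do not have a basis, so evaluations at test septics cannot determine the coordinates of $P_5$. The decisive step is then left to an unspecified ``direct computation''.

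The missing ingredient is a genuine second basis vector. The paper takes $V=(D_1,T_1)_4$ and checks independence: $V/U$ equals $\tfrac{140}{3}$ on $X^7+XZ^6$ but $\tfrac{20}{9}$ on $X^5Z^2+XZ^6$. Writing $P_5=\alpha U+\beta V$ and evaluating on these two septics gives $\alpha+\tfrac{140}{3}\beta=\alpha+\tfrac{20}{9}\beta=-\tfrac{25}{2}$. Hence $\beta=0$ and $\alpha=-\tfrac{25}{2}$.

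Your fallback of comparing sources as weight-$16$ semi-invariants would close the gap. However, it amounts to the full symbolic verification rather than a structural argument.

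Your suggested test septic $X^6Z+XZ^6$ is also useless for the third constant. There $D_2=\lambda XZ$ and $\Lop f=6f$, so $(f,D_2^2)_4=24\lambda^{2}T^{-2}\Lop(\Lop-6)f=0$ by \Cref{lem:XZ}.
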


\begin{proof}
Each identity compares two covariants of the same bidegree: $(3,5)$, $(4,0)$ and $(5,3)$
respectively. By \eqref{eq:dims} the first two spaces are one-dimensional, so each pair is
proportional, and the constant is read off from one septic on which the right-hand side
is nonzero. Take $f=X^{7}+XZ^{6}$, so that 
\begin{align*} 
D_{1} &=2^{7}3^{3}5^{2}7\,X^{4}Z^{2},
& D_{2}&=2^{9}3^{4}5^{2}7\,X^{2},\\
T_{1} &=-2^{13}3^{5}5^{4}7\,XZ^{4},
& (f,D_{2})_{2}&=2^{11}3^{5}5^{3}7\,XZ^{4}.
\end{align*} 
This implies the first equation. (To see the last value: $(X^{7},X^{2})_{2}=0$ since every term needs a $Z$-derivative
of $X^{7}$, and in $(XZ^{6},X^{2})_{2}$ only $j=2$ survives, giving
$\partial_{Z}^{2}(XZ^{6})\,\partial_{X}^{2}(X^{2})=30XZ^{4}\cdot2=60XZ^{4}$; so
$(f,D_{2})_{2}=2^{9}3^{4}5^{2}7\cdot60\,XZ^{4}$.) For the second equation, we let $f=X^{7}+Z^{7}$. In this case 
\[ 
\begin{gathered} 
\begin{aligned} 
D_{2}& =2\cdot5040^{2}\,XZ=:\lambda XZ, & 
D_{3}&=3528\,X^{5}Z^{5},\\ 
T_{5}&=2^{9}3^{5}5^{2}7^{3}(X^{7}-Z^{7}), &
C_{1}&=-2^{9}3^{5}5^{2}7^{3}\lambda, \end{aligned} \\(D_{2},D_{2})_{2}=-2\lambda^{2},\end{gathered} \]
implying the factor of $\tfrac{21}{2}$. For the third identity, we remark that the space $\Cov_{5,3}(7)$ is two-dimensional. Put
$U=(f,D_{2}^{2})_{4}$ and $V=(D_{1},T_{1})_{4}$. On
$f_{1}=X^{7}+XZ^{6}$ and $f_{2}=X^{5}Z^{2}+XZ^{6}$ all three of $P_{5},U,V$ are multiples of
$XZ^{2}$, with factors given in the table below
\[
\begin{array}{c|ccc}
 & P_{5} & U & V\\ \hline
 f_{1} & -2^{23}3^{11}5^{7}7^{2} & 2^{24}3^{11}5^{5}7^{2} & 2^{26}3^{10}5^{6}7^{3}\\
 f_{2} & -2^{25}3^{10}5^{7} & 2^{26}3^{10}5^{5} & 2^{28}3^{8}5^{6}
\end{array}
\]
Since $V/U$ equals $\tfrac{140}{3}$ on $f_{1}$ and $\tfrac{20}{9}$ on $f_{2}$, the
covariants $U,V$ are linearly independent, hence a basis, and $P_{5}=\alpha U+\beta V$.
Evaluating on $f_{1}$ and $f_{2}$ gives $\alpha+\tfrac{140}{3}\beta=-\tfrac{25}{2}$ and
$\alpha+\tfrac{20}{9}\beta=-\tfrac{25}{2}$, so $\beta=0$ and $\alpha=-\tfrac{25}{2}$. Each
entry of the table is confirmed by the script of \Cref{sec:code}, which
verifies the three identities symbolically. For the second part of the statement substitute the three
identities into $Q_{f}=7D_{2}P_{5}-10C_{1}T_{1}$. Remark that  $7\cdot(-\tfrac{25}{2})=-\tfrac{175}{2}$ and
$-10\cdot\tfrac{21}{2}\cdot(-20)=2100=\tfrac{175}{2}\cdot24$. The substitution $H=207360H_{0}$ then contributes a factor
$207360^{3}=2^{27}3^{12}5^{3}$, and $\tfrac{175}{2}\cdot2^{27}3^{12}5^{3}/\cont=1/2880$. As a result $Q_{f}$
is built from $f$ and its quadratic covariant alone. $D_{1},D_{3},T_{5},C_{7}$ are scaffolding.

\end{proof}

\begin{lemma}\label[lemma]{lem:XZ}
For a binary septic $f$, with $f(T)=f(T,1)$, we have that
\[ \begin{gathered}  
(f,XZ)_{2}=-2\,T^{-1}\Lop f \qquad 
(XZ,XZ)_{2}=-2\\
(f,X^{2}Z^{2})_{4}=24\,T^{-2}\Lop(\Lop-6)f.
\end{gathered} \]
\end{lemma}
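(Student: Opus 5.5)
The plan is to verify all three identities directly from \Cref{def:tv}. Each left-hand side is linear in $f$ (the second does not involve $f$ at all), and so is each right-hand side, since $\Lop$ is a linear operator. So it suffices to check the first and third identities on the monomials $X^{k}Z^{7-k}$, $0\le k\le7$, which dehomogenise to $T^{k}$. The key observation is that $XZ$ and $X^{2}Z^{2}$ have only one nonvanishing mixed derivative of the relevant order. This collapses each transvectant sum to a single term.

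\emph{First and second identities.} For $B=XZ$ and $r=2$, the derivative $\partial^{2}B/\partial X^{j}\partial Z^{2-j}$ vanishes unless $j=1$, where it equals $1$. Hence $(f,XZ)_{2}=(-1)\binom21 f_{XZ}=-2f_{XZ}$. For $f=X^{k}Z^{7-k}$ we get $f_{XZ}=k(7-k)X^{k-1}Z^{6-k}$, which becomes $k(7-k)T^{k-1}=T^{-1}\Lop T^{k}$ at $Z=1$, $X=T$. This uses $\Lop T^{k}=k(7-k)T^{k}$, as noted before \Cref{lem:frame}. The same single-term collapse applied to $A=B=XZ$ gives $(XZ,XZ)_{2}=-2\cdot1\cdot1=-2$.

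\emph{Third identity.} For $B=X^{2}Z^{2}$ and $r=4$, only $j=2$ survives, with $\partial^{4}B/\partial X^{2}\partial Z^{2}=4$ and sign $(-1)^{2}\binom42=6$. Thus $(f,X^{2}Z^{2})_{4}=24\,f_{XXZZ}$. On $X^{k}Z^{7-k}$ this gives $24\,k(k-1)(7-k)(6-k)\,T^{k-2}$. It then remains to check the eigenvalue identity
\[
  k(7-k)\bigl(k(7-k)-6\bigr)=k(7-k)(k-1)(6-k),
\]
which holds because $(k-1)(6-k)=-k^{2}+7k-6$. Hence $24\,T^{-2}\Lop(\Lop-6)T^{k}$ matches. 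Boundary monomials, where a factor such as $k$ or $7-k$ vanishes, are handled automatically because both sides vanish.

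There is no real obstacle. The only points needing care are the conventions: which variable carries $T$ under dehomogenisation (the monomial $T^{k}$ corresponds to $X^{k}Z^{7-k}$), and the sign $(-1)^{j}$ in \Cref{def:tv}. I would double-check both on one sample monomial, for instance $k=3$.
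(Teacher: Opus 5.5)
Your proposal is correct and matches the paper's proof: you reduce by linearity to the monomials $X^{m}Z^{7-m}$ and observe that only the $j=1$ term (resp.\ the $j=2$ term) of the transvectant survives. You then match the resulting coefficients $-2\,m(7-m)$ and $24\,m(m-1)(7-m)(6-m)$ with the eigenvalues $\ell$ and $24\,\ell(\ell-6)$ of the operators $-2T^{-1}\Lop$ and $24T^{-2}\Lop(\Lop-6)$ on $T^{m}$, where $\ell=m(7-m)$.
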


\begin{proof}
Take $f=X^{m}Z^{7-m}$. In $(f,XZ)_{2}$ only $j=1$ survives, giving
$-2\,m(7-m)X^{m-1}Z^{6-m}$, and $m(7-m)$ is the eigenvalue of $\Lop$ on $T^{m}$. The
second identity is a direct evaluation. In $(f,X^{2}Z^{2})_{4}$ only $j=2$ survives, with
coefficient $\binom42\cdot4\cdot m(m-1)(7-m)(6-m)$, and $m(m-1)(7-m)(6-m)=\ell(\ell-6)$ for
$\ell=m(7-m)$.
\end{proof}

Using the above two lemmas, we are now able to collapse the expression for $Q_f(T,1)$ using $q$ defined in \Cref{lem:frame}.

\begin{proposition}\label[proposition]{prop:collapse}
If $f$ is in normal position with $D_{2}=\lambda XZ$, then \[Q_{f}(T,1)=50400\,\lambda^{3}\,q(T)\]
with $q$ as in \Cref{lem:frame}.
\end{proposition}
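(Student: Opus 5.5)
The plan is to substitute the normal form $H=D_{2}=\lambda XZ$ into the recoupled expression for $Q_{f}$ from \Cref{prop:recoup}, and then dehomogenise each transvectant with \Cref{lem:XZ}. By \Cref{prop:recoup},
\[
Q_{f}=\tfrac{175}{2}\bigl(24\,(H,H)_{2}\,(f,H)_{2}-H\,(f,H^{2})_{4}\bigr).
\]
Transvectants are bilinear, so I pull the scalar $\lambda$ out of each factor:
\[
(H,H)_{2}=\lambda^{2}(XZ,XZ)_{2},\qquad (f,H)_{2}=\lambda\,(f,XZ)_{2},\qquad (f,H^{2})_{4}=\lambda^{2}(f,X^{2}Z^{2})_{4}.
\]
Both terms of $Q_{f}$ therefore carry the common factor $\lambda^{3}$.

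Next I set $Z=1$. Dehomogenisation is a ring homomorphism $F[X,Z]\to F[T]$, so products of forms go to products of polynomials, and $H(T,1)=\lambda T$. \Cref{lem:XZ} gives the three transvectants as functions of $T$:
\[
(XZ,XZ)_{2}=-2,\qquad (f,XZ)_{2}=-2\,T^{-1}\Lop f,\qquad (f,X^{2}Z^{2})_{4}=24\,T^{-2}\Lop(\Lop-6)f.
\]
Substituting,
\[
Q_{f}(T,1)=\tfrac{175}{2}\lambda^{3}\Bigl(24\cdot(-2)(-2)\,T^{-1}\Lop f-T\cdot24\,T^{-2}\Lop(\Lop-6)f\Bigr)
=2100\,\lambda^{3}\,T^{-1}\bigl(4\Lop-\Lop^{2}+6\Lop\bigr)f .
\]
The operator in brackets is $-\Lop(\Lop-10)$.

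Finally, \Cref{lem:frame} gives $-\Lop(\Lop-10)f=24\,Tq$. Hence $Q_{f}(T,1)=2100\cdot24\,\lambda^{3}q=50400\,\lambda^{3}q(T)$, as claimed.

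The step I expect to need the most care is the bookkeeping in applying \Cref{lem:XZ}. The lemma is stated for monomials $X^{m}Z^{7-m}$, and I must check that its right-hand sides, read as polynomials in $T$, are exactly the dehomogenisations of the order-$5$ and order-$3$ forms. In particular the factors $T^{-1}$ and $T^{-2}$ must cancel to give genuine polynomials. They do: $\Lop$ kills $T^{0}$ and $T^{7}$, and $\Lop(\Lop-6)$ kills $T^{0},T^{1},T^{6},T^{7}$. After that check, the proof is only the scalar arithmetic $\tfrac{175}{2}\cdot24=2100$ and $2100\cdot24=50400$.
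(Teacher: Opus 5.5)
Your proposal is correct and is essentially the paper's own proof. The only difference is order of operations: the paper substitutes $D_{2}=\lambda XZ$ into the three recoupling identities separately, obtaining $T_{1}=40\lambda T^{-1}\Lop f$, $C_{1}=-21\lambda^{2}$ and $P_{5}=-300\lambda^{2}T^{-2}\Lop(\Lop-6)f$, and then combines them, whereas you substitute into the already-combined formula for $Q_{f}$; the resulting operator $-2100\lambda^{3}T^{-1}\Lop(\Lop-10)f$ and the final appeal to \Cref{lem:frame} are the same.
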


\begin{proof}
By \Cref{prop:recoup} and \Cref{lem:XZ}, we compute directly that
\begin{align*}
T_{1}&=-20\lambda(f,XZ)_{2}=40\lambda\,T^{-1}\Lop f,\\ C_{1}&=\tfrac{21}{2}\lambda^{2}(-2)=-21\lambda^{2},\\
P_{5}&=-\tfrac{25}{2}\lambda^{2}(f,X^{2}Z^{2})_{4}=-300\lambda^{2}T^{-2}\Lop(\Lop-6)f.
\end{align*}Hence
\begin{align*} 
  Q_f(T,1) &= 7D_{2}P_{5}-10C_{1}T_{1}=-2100\lambda^{3}T^{-1}\Lop(\Lop-6)f+8400\lambda^{3}T^{-1}\Lop f\\
  &=-2100\lambda^{3}\,T^{-1}\Lop(\Lop-10)f=50400\lambda^{3}q
\end{align*} 
by \Cref{lem:frame}.
\end{proof}

Note: the ratio $7:-10$ is exactly what turns $\mathcal{L}-6$ into
$\mathcal{L}-10$.

 Using the four-term $q$ of \Cref{lem:frame}, define
\begin{equation}\label{eq:g}
\begin{aligned}
  g&=Tq^{2}-\tfrac{11}{12}T^{2}qq'-\tfrac1{12}T^{3}qq''+\tfrac14T^{3}(q')^{2}\\
   &=a_{1}^{2}T^{11}+\tfrac54a_{1}a_{4}T^{8}-\tfrac14(17a_{1}a_{6}+a_{3}a_{4})T^{6}+\tfrac54a_{3}a_{6}T^{4}+a_{6}^{2}T,\\
  P_{0}&=\tfrac12T^{6}(\Dop-2)q=\tfrac32a_{1}T^{11}-\tfrac12a_{3}T^{9}-a_{6}T^{6},\\
  R_{0}&=\tfrac12T^{4}(3-\Dop)q=-a_{1}T^{9}-\tfrac12a_{4}T^{6}+\tfrac32a_{6}T^{4}.
\end{aligned}
\end{equation}

\begin{theorem}\label[theorem]{thm:cert}
For every binary septic $f$ with $A$ and $C$ as in \eqref{eq:D2}, we have
\[
  W(f',g)=q^{3}-35a_{1}fR_{0}+AP_{0}+CR_{0},
\]
as an identity in $\mathbb{Q}[a_0, \ldots, a_7, T]$, hence in $F[a_0, \ldots , a_7, T]$  for every field $F$ of
characteristic zero.
Moreover $\deg g\le 11$, with equality if and only if
$a_1\ne 0$; in particular $\deg g\le 12$, which is the required bound by  \Cref{lem:residues}(b).
\end{theorem}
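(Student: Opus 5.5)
This is a polynomial identity in $\QQ[a_0,\dots,a_7,T]$, and I plan to verify it by organizing the computation along the eigen-decomposition of the Euler operator $\Dop$, rather than expanding blindly. Both sides are linear in $f$ once $q$, $g$, $P_0$, $R_0$ are regarded as fixed; these depend only on the visible coefficients $a_1,a_3,a_4,a_6$. The terms $A$, $C$ and $35a_1 fR_0$ bring in the invisible coefficients $a_0,a_2,a_5,a_7$.

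First I grade by invisible coefficients. The left side $W(f',g)=f'g'-f''g$ is linear in $f$, so I split $f=f_{\mathrm{vis}}+f_{\mathrm{inv}}$, where
\[
f_{\mathrm{inv}}=a_0T^7+a_2T^5+a_5T^2+a_7 .
\]
The invisible monomials span exactly the kernel of $\Lop(\Lop-10)$. On the right side, $A$ and $C$ are bilinear: $A=35a_0a_6-10a_1a_5+5a_2a_4-2a_3^2$, and similarly for $C$. Every term in $A$ and $C$ other than $-2a_3^2$ and $-2a_4^2$ is (invisible)$\times$(visible). Likewise $35a_1fR_0$ splits into a visible part and a part linear in invisibles. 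The identity therefore separates into two checks.

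(i) The part linear in $a_0,a_2,a_5,a_7$. For each invisible monomial $T^m$ with $m\in\{7,5,2,0\}$, I must show
\[
W\bigl((T^m)',g\bigr)=-35a_1T^mR_0+(\partial A/\partial a)\,P_0+(\partial C/\partial a)\,R_0 .
\]
Since $g,P_0,R_0$ are explicit short polynomials (three to five terms each), these are four small checks. For example, with $a=a_0$ and $m=7$, I need $W(7T^6,g)=-35a_1T^7R_0+35a_6P_0$. Because $W(T^k,T^j)=(j-k)T^{k+j-1}$, these checks reduce to bookkeeping of monomial exponents.

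(ii) The purely visible part. Here I need
\[
W(f_{\mathrm{vis}}',g)=q^3-35a_1f_{\mathrm{vis}}R_0-2a_3^2P_0-2a_4^2R_0 ,
\qquad f_{\mathrm{vis}}=a_1T^6+a_3T^4+a_4T^3+a_6T .
\]
This is a cubic identity in four variables. I would verify it by matching coefficients of the monomials $a_1^ia_3^ja_4^ka_6^l$, which is bounded work: $g$ has five terms and $f_{\mathrm{vis}}'$ has four. A more conceptual route, which I would attempt first, uses Lemma~\ref{lem:frame}: $f_{\mathrm{vis}}$ is recovered from $q$ through $Tq=-\tfrac1{24}\Lop(\Lop-10)f$, since $\Lop(\Lop-10)$ is invertible on the visible eigenspaces. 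Then $W(f_{\mathrm{vis}}',g)$ becomes a bilinear expression in $q$ built from $\Dop$-operators. The defining formula $g=Tq^2-\tfrac{11}{12}T^2qq'-\cdots$ is presumably the Rankin--Cohen-type bracket designed so that this collapses to $q^3$ up to the listed correction terms.

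The degree claim is immediate from the expanded form of $g$ in \eqref{eq:g}. That expansion itself should be checked by substituting $q=a_1T^5-a_3T^3-a_4T^2+a_6$ monomial by monomial. Its leading term is $a_1^2T^{11}$, and the next possible term is of degree $8$. Hence $\deg g\le 11$, with equality iff $a_1\neq0$, and the bound $\deg g\le 12$ required by Lemma~\ref{lem:residues}(b) follows.

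The main obstacle is step (ii). Cancellation of all non-$q^3$ cubic terms is exactly where the specific constants $\tfrac{11}{12},\tfrac1{12},\tfrac14$ and the factor $35$ matter, and an arithmetic slip is easy. I would do it symbolically, for instance with the script of \Cref{sec:code}, and cross-check by hand on a couple of coefficient classes such as $a_1^3$ and $a_6^3$.
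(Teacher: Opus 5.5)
Your proposal is correct and matches the paper's proof essentially step for step. The paper also notes that the difference of the two sides is affine-linear in the invisible coefficients $a_0,a_2,a_5,a_7$. It checks the four partial-derivative identities, which are your part (i); for example $W(7T^6,g)=7T^5(\Dop-6)g=-35a_1T^7R_0+35a_6P_0$. It then verifies the purely visible cubic identity, your part (ii), by comparing the twenty coefficients of $c_kc_lc_m$, each of which carries $T^{k+l+m}$, and reads the degree claim off the expansion of $g$.
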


\begin{proof}
Let $\mathcal E$ be the difference of the two sides. We remark that the polynomials $q,g,P_{0},R_{0}$
involve only the visible coefficients $a_1, a_3, a_4,$ and $a_6$. $W(f',g)$ and $fR_{0}$ are linear in $f$ and each
monomial of $A$ and $C$ contains at most one invisible coefficient. Hence $\mathcal E$ is
affine-linear in $(a_{0},a_{2},a_{5},a_{7})$, and it vanishes if and only if its constant
term and its four partial derivatives vanish. Differentiating $\mathcal E$ with respect to
$a_{0},a_{2},a_{5},a_{7}$ gives, after dividing the first identity by $7$ and the second by $5$,
\begin{align*}
 T^{5}(\Dop-6)g&=-5a_{1}T^{7}R_{0}+5a_{6}P_{0}, &
 T^{3}(\Dop-4)g&=-7a_{1}T^{5}R_{0}+a_{4}P_{0}-2a_{6}R_{0},\\
 2(\Dop-1)g&=-35a_{1}T^{2}R_{0}-10a_{1}P_{0}+5a_{3}R_{0}, & 0&=-35a_{1}R_{0}+35a_{1}R_{0},
\end{align*}
each of which is verified by substituting \eqref{eq:g} and comparing coefficients.
For the constant term put $a_{0}=a_{2}=a_{5}=a_{7}=0$, so
$f=f_{0}=a_{1}T^{6}+a_{3}T^{4}+a_{4}T^{3}+a_{6}T$, $A=-2a_{3}^{2}$, $C=-2a_{4}^{2}$, and write
$q=\sum_{k\in S}c_{k}T^{k}$ with $S=\{5,3,2,0\}$, $(c_{5},c_{3},c_{2},c_{0})=(a_{1},-a_{3},-a_{4},a_{6})$. 
With $\Lam=\Dop^{2}-5\Dop+3$, we compute $f_{0}=\tfrac13T\Lam q$, and the constant term becomes
\[
  W(f_{0}',g)=q^{3}-c_{3}^{2}T^{6}(\Dop-2)q-c_{2}^{2}T^{4}(3-\Dop)q-\tfrac{35}{6}c_{5}T^{5}(\Lam q)(3-\Dop)q .
\]
Both sides are cubic forms in $(c_{5},c_{3},c_{2},c_{0})$ and the coefficient of
$c_{k}c_{l}c_{m}$ carries $T^{k+l+m}$. Thus the equation reduces to twenty scalar identities, one for each
multiset $\{k,l,m\}\subseteq S$. The table below lists the left side, the multinomial coefficient
from $q^{3}$, and the total correction from the three other terms.
\begin{center}
\small
\renewcommand{\arraystretch}{1.08}
\begin{tabular}{@{}lrrr@{\qquad}|@{\qquad}lrrr@{}}
\toprule
$\{k,l,m\}$ & LHS & $q^3$ & corr.\ & $\{k,l,m\}$ & LHS & $q^3$ & corr.\\
\midrule
$555$ & $36$    & $1$ & $35$     & $333$ & $0$  & $1$ & $-1$\\
$553$ & $-32$   & $3$ & $-35$    & $332$ & $3$  & $3$ & $0$\\
$552$ & $-99/2$ & $3$ & $-105/2$ & $330$ & $5$  & $3$ & $2$\\
$550$ & $-29/2$ & $3$ & $-35/2$  & $322$ & $3$  & $3$ & $0$\\
$533$ & $0$     & $3$ & $-3$     & $320$ & $6$  & $6$ & $0$\\
$532$ & $47/2$  & $6$ & $35/2$   & $300$ & $3$  & $3$ & $0$\\
$530$ & $117/2$ & $6$ & $105/2$  & $222$ & $0$  & $1$ & $-1$\\
$522$ & $45/2$  & $3$ & $39/2$   & $220$ & $0$  & $3$ & $-3$\\
$520$ & $41$    & $6$ & $35$     & $200$ & $3$  & $3$ & $0$\\
$500$ & $-99/2$ & $3$ & $-105/2$ & $000$ & $1$  & $1$ & $0$\\
\bottomrule
\end{tabular}
\end{center}
We now explain how to compute the entries in detail. Write $f_0=\sum_{k\in S}\sigma_k c_kT^{k+1}$,
where $\sigma_k=\frac13(k^2-5k+3)$ takes the values $(+,-,-,+)$ on
$S=\{5,3,2,0\}$, and 
\[g =\sum_{\ell\le m}g_{\ell m}c_\ell c_mT^{\ell+m+1}.\]
A direct computation shows that 
\[ g_{\ell m} = \begin{cases} \frac{24-10(\ell+m)- (\ell -m)^2 + 4 \ell m}{12} & \ell \neq m\\
\frac{\ell^2 - 5\ell + 6 }{6} & \ell = m\end{cases}.\]
In the product $W(f_0',g)=f_0'g'-f_0''g$ a monomial $c_kc_\ell c_m$ arises
once for each way of splitting the multiset $\{k,\ell,m\}$ as
$\{k\}\uplus\{\ell,m\}$, the index $k$ being the one taken from $f_0'$.
There is one such splitting for each distinct value occurring in the
multiset. Hence the left side (LHS) of row $\{k,\ell,m\}$ is computed by
\[
\sum_{k}\sigma_k\,(k+1)\,(\ell+m+1-k)\,g_{\ell m},
\]
the sum being over the distinct values $k$, with $\{\ell,m\}$ the
complementary pair in each case. \\
For instance, in row $\{5,5,5\}$ there is
one splitting, $\sigma_5=+1$, $(k+1)(\ell+m+1-k)=6\cdot 6=36$ and
$g_{55}=1$, so second entry (LHS) is
$36$. For the right hand side of the equation $q^3$ contributes $1$ and
$-\frac{35}{6}c_5T^5(\Lambda q)(3-\mathcal D)q$ contributes
$-\frac{35}{6}\cdot 3\cdot(-2)=35$, because $\Lambda T^5=3T^5$ and
$(3-\mathcal D)T^5=-2T^5$. \\
Multiple splittings are possible. For instance, in row $\{3,3,0\}$ there are two splittings:
(1) $k=3$ with $\{\ell,m\}=\{3,0\}$ gives
$(-1)\cdot 4\cdot 1\cdot g_{30}=(-1)\cdot4\cdot1\cdot(-\frac54)=5$, and
(2) $k=0$ with $\{\ell,m\}=\{3,3\}$ gives $(+1)\cdot 1\cdot 7\cdot g_{33}=0$,
since $g_{33}=0$. As a result, the left side is $5=3+2$.\\ 
The whole identity is also verified symbolically by
the script in \Cref{sec:code}. The degree statement is read off from \eqref{eq:g}.
\end{proof}

The polynomial $g$ is not guessed. In the full paper, we develop this Rankin--Cohen-type calculus and show that it confirms Hermite's and Joubert's results in degrees $5$ and $6$ by the same recipe. There, we will also explain the occurrence of the numbers $7$ and $10$ in \Cref{lem:frame}.

\begin{proof}[Proof of \Cref{thm:identity}]
The first sum vanishes by \Cref{lem:euler}, since $\deg q_{f}\le5$. For the second, the
vanishing of $\sum_{i}y_{i}^{3}$ is unaffected if $\widehat Q_{f}$ is replaced by
$Q_{f}=\cont\,\widehat Q_{f}$ as in this case all $y_i$ are multiplied by $\cont$. 
Put
$q_f^{\mathrm{un}}(T)=Q_f(T,1)=\cont\,q_f(T)$. Let $\mathcal U$ be the set of septics over $\Fbar$ with
$\Disc(f)\ne0$, $B^{2}-4AC\ne0$ and $\Res(f,D_{2})\ne 0$ (that is, no root of $D_2$ is a root of
$f$, so that $\infty$ will not be a root after the change of
variable).  By construction, this set is Zariski-open. It is also nonempty as $X^{7}+Z^{7} \in \mathcal{U}$, hence dense. For $f\in\mathcal U$, there exists a projective
transformation $\gamma$ with $D_{2}(\gamma\cdot f)=\lambda XZ$ by \Cref{lem:normal}. By
\Cref{lem:inv}(a) it suffices to prove $\sum y_{i}^{3}=0$ for $\gamma\cdot f$, which again
lies in $\mathcal U$, has $\infty$ not among its roots, and may be assumed to be monic. By
\Cref{prop:collapse}, $q_f^{\mathrm{un}}=50400\lambda^{3}q$, so it suffices to show
$\sum_{i}q(r_{i})^{3}/f'(r_{i})^{3}=0$. In normal position $A=C=0$, and \Cref{thm:cert}
gives $W(f',g)=q^{3}-35a_{1}fR_{0}$. Evaluating at a root $r_{i}$ this gives $W(f',g)(r_{i})=q(r_{i})^{3}$.
Therefore
\[
  \sum_{i}\frac{q(r_{i})^{3}}{f'(r_{i})^{3}}=\sum_{i}\frac{W(f',g)(r_{i})}{f'(r_{i})^{3}}=0
\]
by \Cref{lem:residues}(b), since $\deg g\le11\le 2 \cdot 7 -2$.  Finally, by \Cref{lem:inv}(b), $e_3(y)\Disc(f)$ is a polynomial in the
coefficients of monic $f$. Since $e_1(y)=0$, Newton's identity gives
$\sum y_i^3 = 3e_3(y)$, so this polynomial vanishes on the dense open
set of monic $f \in \mathcal{U}$, hence identically. For any separable
monic $f$, $\Disc(f) \neq 0$ then forces $e_3(y) = 0$.
\end{proof}

\section{Proof of the main theorem}\label{sec:mainproof}

Suppose that $E$ is a field extension of $F$ with $[E:F]=7$. Since $7$ is prime, every $\theta\in E\setminus F$
generates $E$, and its minimal polynomial $f_{\theta}$ is separable of degree $7$. 
For such
$\theta$ put $a=q_{f_{\theta}}(\theta)/f_{\theta}'(\theta)$ as in \eqref{eq:formula}. By
\eqref{eq:trsum}, $\tr(a) = \sum_i y_i$ and $\tr(a^3) = \sum_i y_i^3$ where $y_i =q_{f_{\theta}}(r_{i})/f_{\theta}'(r_{i})$,
so \Cref{thm:identity} gives $\tr(a)=\tr(a^{3})=0$, and $c_{1}(a)=c_{3}(a)=0$ by Newton. Since nonzero elements of $F$ have nonzero trace, it is enough to find $\theta$ with $a \ne 0$, that is, \ with $q_{f_{\theta}}(\theta)\ne0$. Since
$\deg q_{f_{\theta}}\le5<7$, this is equivalent to proving $q_{f_{\theta}}\ne0$.

Fix an $F$-basis of $E$, so $E\cong\AA^{7}(F)$. For \emph{every} $\theta\in E$ let
$f_{\theta}(T)=\det(T-m_{\theta}\mid_E)$ be the characteristic polynomial of multiplication by
$\theta$. We remark that for a generator it is the minimal polynomial, and for $\theta\in F$ it is $(T-\theta)^{7}$, for which $q_{f_{\theta}} = 0$, so that in fact $F \cdot 1 \subseteq \mathcal{Z}$;  we nevertheless remove it explicitly so that the word ``generators'' below needs no comment.
Its coefficients are polynomial functions of the coordinates of $\theta$, hence so are the
coefficients of $q_{f_{\theta}}$.
Let $\mathcal{Z}\subseteq\AA^{7}$ be the closed set where they all
vanish, and $\mathcal{W}=\AA^{7}\setminus(\mathcal{Z}\cup F\cdot1)$. To see $\mathcal{Z} \cup F\cdot1 \ne\AA^{7}$ we may pass to $\Fbar$,
where $E\otimes_{F}\Fbar\cong\Fbar^{7}$ and every monic septic arises as $f_\theta$ for some $\theta$. Take
$f=T^{7}+T$, that is\ $X^{7}+XZ^{6}$, separable (its roots are $0$ and the sixth roots of $-1$).
The values computed in the proof of \Cref{prop:recoup} give $D_{2}=2^{9}3^{4}5^{2}7\,X^{2}$ and
$P_{5}=-2^{23}3^{11}5^{7}7^{2}\,XZ^{2}$, while $C_{1}=\tfrac{21}{2}(D_{2},D_{2})_{2}=0$ because $(X^{2},X^{2})_{2}=0$. Thus this septic lies outside the open set
$\mathcal{U}$ used in the proof of \Cref{thm:identity}, since
$D_2$ is a perfect square. That theorem has already been proved for
every separable septic; here we only need $Q_f \neq 0$. We deduce that

 $Q_{f}=7D_{2}P_{5}=-2^{32}3^{15}5^{9}7^{4}\,X^{3}Z^{2}$ and
\[
  q_{f}=Q_{f}(T,1)/\cont=-3\cdot35^{3}\,T^{3}\ne0 .
\]
So $\mathcal{Z}$ is a proper closed subset and $\mathcal{W}$ is a nonempty open set consisting of generators.
Since $F$ is infinite, $F$-points are dense in $\AA^{7}$, so $\mathcal{W}(F)$ is nonempty (indeed
Zariski-dense), and every $\theta\in \mathcal{W}(F)$ gives $a\ne0$, hence a generator with the desired vanishing traces $\tr(a) = \tr(a^3) =0$. \qed

\begin{remark}
Three simplifications are special to the proof given in this announcement. First, characteristic zero allows the rational constants in the Wronskian normal form and makes Newton's equivalence between the third power sum and $e_3$ available. Second, every characteristic-zero field is infinite, so the required Zariski-open subset has an $F$-point. Third, because the extension has prime degree, any nonzero trace-zero output is automatically primitive. The comprehensive paper replaces the rational computations by calculations of a primitive integral covariant, proves the coefficient identity in every characteristic, treats arbitrary degree-seven \'etale algebras over infinite fields, and degree-seven field extensions over arbitrary fields.
\end{remark}

\section[Complete Numerical Examples]
{Complete Numerical Examples\texorpdfstring{\protect\footnotemark}{}}
\label{sec:example}

\footnotetext{The relevant SageMath code for the computations in this numerical example can be found here:
\mbox{\url{https://sagecell.sagemath.org/?q=brkqtq}}. 

The user can use this calculator to experiment with other polynomials. However, note that while the theorem guarantees the covariant evaluates to a nonzero generator on a dense Zariski-open set, an unlucky choice of the initial generator may give zero output. In that case, choose another generator of the same field extension and apply the construction to its minimal polynomial; an example is given below. }

Consider the polynomial
\[
  f_0(T)=T^7+2T^6+2.
\]
Using Eisenstein's criterion with $p=2$, it is easily seen that the polynomial is irreducible over $\mathbb{Q}$. Translating the variable by one gives a new polynomial 
\begin{equation}\label{eq:worked-g}
\begin{aligned}
  f_1(T)=f_0(T-1)
  ={}&T^7-5T^6+9T^5-5T^4-5T^3 + 9T^2-5T+3.
\end{aligned}
\end{equation}
Since translation preserves irreducibility,  $f_1$ is also irreducible over $\QQ$. Let $\theta$ be a root of $f_1$ in the algebraic closure and put $E=\QQ(\theta)$. Notice that the two coefficients we want to remove are both present in $f_1$: the coefficients of $T^6$ and $T^4$ are both $-5$.

The homogenization of $f_1$ is
\[
\begin{aligned}
  G(X,Z)={}&X^7-5X^6Z+9X^5Z^2-5X^4Z^3-5X^3Z^4 +9X^2Z^5-5XZ^6+3Z^7.
\end{aligned}
\]
For this input, the transvectant chain gives
\[
  Q_G=-12330752933527289856000000000\, Z^2(X-Z)^2(7X-4Z).
\]
Dividing by the content  $\cont=2^{32}3^{14}5^6\cdot 7$ gives
\[ q_{f_1}(T) = -5488000\,(T-1)^2(7T-4),
\qquad 5488000=2^7 5^3 7^3 . \]
(Equivalently, $q_{f_0}(T)=-5488000\,T^2(7T+3)$ and
$q_{f_1}(T)=q_{f_0}(T-1)$: translation acts by a unipotent
matrix, so the covariant is carried along with no scalar
factor.)

Since a nonzero rational scalar changes neither $\tr(a)$ nor
$\tr(a^3)$ nor the property of generating $E$, we may work
with
\[\tilde{q}_{f_1}(T) = -(T-1)^2(7T-4) \]
in place of $q_{f_1}$.

It can be checked that 
\[
  f_1'(T)=7T^6-30T^5+45T^4-20T^3-15T^2+18T-5.
\]
Now we define
\[
  a := \frac{\tilde{q}_{f_1}(\theta)}{f_1'(\theta)} = \frac{-(\theta-1)^2(7\theta-4)}
  {7\theta^6-30\theta^5+45\theta^4-20\theta^3-15\theta^2+18\theta-5}.
\]
The denominator is nonzero because $f_1$ has distinct roots. The numerator is also nonzero because a degree-seven algebraic number cannot equal  $1$ or $4/7$.
An exact resultant calculation gives
\[
\begin{aligned}
  \chi_a(Y)={}&Y^7
  -\frac{1350885}{7619054}Y^5
  -\frac{282963}{7619054}Y^3 +\frac{178605}{3809527}Y^2
  +\frac{89867}{30476216}Y
  -\frac{1655105}{60952432}.
\end{aligned}
\]
Note that the coefficients of both $Y^6$ and $Y^4$ have been eliminated, as desired. 

Moreover, the element $a$ is nonzero and has trace zero, so it is not rational. Since $[E:\QQ]=7$ is prime, $a$ generates $E$.

We remark that it is possible that the computed element $a$
is zero. For example, take
\[ f(T)=T^7-6T^6+3T^5+5T^4+10T^3-4T^2+4T-2, \]
which is irreducible over $\QQ$, and let $\theta$ be a root
of $f$. One checks from \eqref{eq:D2} that $A=B=C=0$, so $D_2=0$ and
hence $T_1=C_1=P_5=0$ and $Q_f=0$ by \Cref{prop:recoup}. We replace
$\theta$ by $\theta'=\theta+\theta^2$; since $\theta'\notin\QQ$
and $7$ is prime, $\QQ(\theta')=\QQ(\theta)$. This element has
minimal polynomial
\[ f_2(T)=T^7-36T^6+125T^5+245T^4+180T^3-10T^2-60T-50, \]
which is irreducible over $\QQ$, with
$\Disc f_2=2^4 5^7 53^4\cdot 3230449^2$. Its $T^6$ and $T^4$
coefficients, $-36$ and $245$, are again both nonzero, and
\begin{align*}
q_{f_2}(T) &= 2^3 5^5 53^3\,\widetilde q_{f_2}(T)\neq 0,\\
\widetilde q_{f_2}(T) &= 3440233\,T^5-3259080\,T^4+33296830\,T^3\\
&\qquad +12058835\,T^2-50388855\,T-10419925 .
\end{align*}
We compute that
\[ a'=\frac{\widetilde q_{f_2}(\theta')}{f_2'(\theta')}\neq 0 \]
with $\tr(a')=\tr(a'^3)=0$; since $a'\neq 0$ has trace zero it
is not rational, so it generates $E$.

\section{Supplementary Code}\label{sec:code}

The accompanying code allows the reader to reproduce the longer polynomial calculations using exact integer and rational arithmetic. The repo is available at \url{https://github.com/kindaai/HJ7}.

The main script \texttt{verify\_hj7.py} constructs

\[
  D_{1},D_{2},D_{3},T_{1},T_{5},C_{1},C_{7},P_{5},
  \qquad
  Q_{f}=7D_{2}P_{5}-10C_{1}T_{1},
\]

from the unnormalised transvectant of \Cref{def:tv}. It reproduces the following calculations.

\begin{enumerate}

\item
It expands $Q_{f}$, obtaining order $5$, the $780$ nonzero monomials, and
\[
  2^{32}3^{14}5^{6}\cdot7
\]
as the greatest common divisor of its coefficients. It also reproduces the formula \eqref{eq:D2} for $D_{2}$ and the formula for $Q_{f}$ in
\Cref{prop:recoup}.

\item
It reproduces the three recoupling identities of \Cref{prop:recoup}, the four dimensions in \eqref{eq:dims}, the three formulas of \Cref{lem:XZ}, and the two formulas of \Cref{lem:frame}. It also reproduces the values displayed in the proof of \Cref{prop:recoup} and the calculation in \Cref{prop:collapse} for a septic in normal position.

\item
It expands the auxiliary polynomial $g$ in \eqref{eq:g} and computes both sides of the identity in \Cref{thm:cert} as polynomials in $a_{0},\dots,a_{7}$ and $T$. It also repeats the four derivative calculations in the proof and the calculations in all twenty rows of the table.

\item
It reproduces the calculation with $T^{7}+T$ in \Cref{sec:mainproof}. It also reproduces the displayed translation, covariant, derivative, and resultant in the first numerical example of \Cref{sec:example}, beginning with \eqref{eq:worked-g}.

\end{enumerate}

A second script, \texttt{verify\_from\_q.py}, reproduces the first numerical example without constructing $Q_{f}$ and without using transvectants. It begins with the translated septic $f_{1}(T)$ of \eqref{eq:worked-g} and the polynomial
\[
  \tilde{q}_{f_{1}}(T)=-(T-1)^{2}(7T-4)
\]
displayed in \Cref{sec:example}. Letting $\theta$ be the class of $T$ in
$\QQ[T]/(f_{1})$, the script forms
\[
  a=\frac{\tilde{q}_{f_{1}}(\theta)}{f_{1}'(\theta)}.
\]
It computes the inverse of $f_{1}'(\theta)$ and then computes
\[
  \tr(a),
  \qquad
  \tr(a^{3}).
\]
It also computes the characteristic polynomial of $a$ and compares it with the polynomial displayed in \Cref{sec:example}. Finally, it shows that the $Y^{6}$ and $Y^{4}$ coefficients are zero, computes irreducibility over $\QQ$, and compares the characteristic polynomial with the polynomial obtained from the resultant. Thus the numerical example is reproduced once from the transvectant chain and once directly from $q_{f_{1}}$.

\section{Declaration on the use of artificial intelligence}\label{sec:ai}
This project originated with Behzad Nikzad, who initiated an AI-assisted exploration of permutations of Newton sums and their relations. He subsequently brought together the present research team, and the present work developed through our collaboration.
 During this work the authors used multiple models from ChatGPT (OpenAI) and Claude (Anthropic) for extensive collaboration on technical details, computer-assisted verification, and exposition. The models also took part in the search that led to the covariant $Q_{f}$ and to
the auxiliary polynomial $g$. The authors take full responsibility for the results presented in this paper.

\end{document}